\theoremstyle{plain}
\newtheorem{thm}{Theorem}[section]
\newtheorem{cor}[thm]{Corollary}
\newtheorem{prop}[thm]{Proposition}
\theoremstyle{definition}
\theoremstyle{remark} \tolerance=10000 \hbadness=10000
\def \ni{\noindent}
\author{
    Anu V.\footnote{E-mail : anusaji1980@gmail.com}\\ Department of Mathematics\\
    St.Peter's College\\ Kolenchery - 682 311\\  Kerala, India.\and
    Aparna Lakshmanan S.\footnote{E-mail : aparnaren@gmail.com}\\
    Department of Mathematics\\St.Xavier's College for Women\\Aluva -
    683 101\\\vspace{0.2cm} Kerala, India.}
\title{Impact of Some Graph Operations on Double Roman Domination Number}
\date{}
\begin{document}

\maketitle

\begin{abstract}

Given a graph $G=(V,E)$,  a function $f:V\rightarrow \{0,1,2,3\}$
having the property that if $f(v)=0$, then there exist $
v_{1},v_{2}\in N(v)$ such that $f(v_{1})=f(v_{2})=2$ or there
exists $ w \in N(v)$ such that $f(w)=3$, and if $f(v)=1$, then
there exists $ w \in N(v)$ such that $f(w)\geq 2$ is called a
double Roman dominating function (DRDF). The weight of a DRDF $f$ is
the sum $f(V)=\sum_{v\in V}f(v)$. The double Roman domination
number, $\gamma_{dR}(G)$, is the minimum among the weights of  DRDFs on $G$. In
this paper, we   study the impact of  some graph operations, such as cartesian product,  addition of twins and corona with a graph,  on
double Roman domination number.\\

\ni {\bf Keywords:} Domination Number,  Roman Domination Number,  Double Roman Domination Number,  Cartesian Product.\\

\noindent \textbf{AMS Subject Classification Numbers:} 05C69; 05C76.

\end{abstract}


\section{Introduction}
\label{intro}
Let $G = (V(G),E(G))$ be a graph with vertex set $V(G)$ and edge
set $E(G)$. If there is no ambiguity in the choice of $G$, then we
write $V(G)$ and $E(G)$ as $V$ and $E$ respectively.  Let
$f:V\rightarrow \{0,1,2,3\}$ be a function defined on $V(G). $  Let $V_{i}^{f}=\{v\in V(G) : f(v)=i\}.$ (If there is no ambiguity, $V_{i}^{f}$ is written as $V_{i}$.) Then $f$ is a double Roman dominating function
(DRDF)
on  $G$ if it satisfies the  following conditions.\\
(i) If $v\in V_{0}$, then vertex $v$ must have at least two
neighbors
in $V_{2}$ or at least one neighbor in $V_{3}$.\\
(ii) If $v\in V_{1}$, then vertex $v$ must have at least one
neighbor
in $V_{2} \cup V_{3}$.\par

The weight of a DRDF $f$ is
the sum $f(V)=\sum_{v\in V}f(v)$. The double Roman domination
number, $\gamma_{dR}(G)$, is the minimum among the weights of  DRDFs on $G$, and a DRDF
on $G$ with weight $\gamma_{dR}(G)$ is called a
$\gamma_{dR}$-function of $G$
\cite{Bee}.\par

The  study of double Roman domination was  initiated by R. A. Beeler, T. W.
Haynes and S. T. Hedetniemi in \cite{Bee}.
They  studied
the relationship between double Roman domination and Roman
domination and the bounds on the double Roman domination number of
a graph $G$ in terms of its domination number. They also
determined a sharp upper bound on $\gamma_{dR}(G)$ in terms of the
order of $G$ and characterized the graphs attaining this bound.  In \cite{Abd1}, it is proved that the decision problem associated with  $\gamma_{dR}(G)$ is NP-complete for bipartite and chordal graphs. Moreover, a characterization of graphs $G$ with small $\gamma_{dR}(G)$ is provided. In \cite{Hao}, G. Hao et al. initiated the study of the double Roman domination of digraphs. L. Volkmann gave a sharp lower bound on $\gamma_{dR}(G)$ in \cite{Vol}. In \cite{Anu}, it is proved that  $\gamma_{dR}(G)+2 \leqslant \gamma_{dR}(M(G)) \leqslant \gamma_{dR}(G)+3$,
where $M(G)$ is the Mycielskian graph of $G$. It is also proved  that there
is no relation between the double Roman domination number of a
graph and its induced subgraphs.  In \cite{Amj}, J. Amjadi et al.  improved an upper bound on  $\gamma_{dR}(G)$ given in \cite{Bee} by showing that for any connected graph $G$ of order $n$ with minimum degree at least two, $\gamma_{dR}(G)\leqslant \frac{8n}{7}$.

\subsection{Basic Definitions and Preliminaries}
The open
neighborhood of  a vertex $v\in V$ is the set $N(v)=\{u: uv \in
E\}$, and its closed neighborhood is $N[v]=N(v)\cup \{v\}$.
The vertices in $ N(v)$ are called the neighbors of $v$. For a set $D \subseteq V $, the open neighborhood is $N(D)=\cup_{v\in D}N(v)$ and the closed neighborhood is $N[D]=N(D)\cup D$. A set $D$ is a dominating set if $N[D]=V$. The domination number $\gamma(G)$ is the minimum cardinality of a dominating set in $G$.
A false twin of a vertex $u$ is a vertex $u'$ which is adjacent
to all the vertices in $N(u)$.  A true twin of a vertex $u$ is a vertex $u'$ which is adjacent
to all the vertices in $N[u]$. A subset $S$ of the vertex set $V$ of a graph $G$ is called independent if no two vertices of $S$ are adjacent in $G$.\par

If $f:A\rightarrow B$ is a function from $A$ to $B$, and $C$ is a subset of $A$, then the restriction of $f$ to $C$ is the function which is defined by the same rule as $f$ but with a smaller domain set $C$ and is denoted by $f|_{C}$.\par

A complete graph on $n$ vertices, denoted by  $K_{n}$, is the graph in which any two vertices are adjacent. A trivial graph is a graph with no edges. A path on $n$ vertices $P_{n}$  is the
graph with vertex set $\left\{v_{1},v_{2},\dots,v_{n} \right\}$
and $v_{i}$ is adjacent to
$v_{i+1}$ for $i=1,2,\dots,n-1$. If in addition, $v_{n}$ is adjacent
to $v_{1}$ and $n\geq 3$, it is called  a cycle of length
$n$, denoted by $C_{n}$. A universal vertex is a vertex adjacent to all the other vertices of the graph. A pendant (or leaf) vertex of $G$ is a vertex adjacent
to only  one
vertex of $G$. The unique vertex adjacent to a pendant vetrtex is called its support vertex. A graph $G$ is bipartite if the vertex set can be partitioned
into two non-empty subsets $X$ and $Y$ such that every edge of
$G$ has one
end vertex in $X$ and the other in $Y$. A bipartite graph in which each
vertex of $X$ is adjacent to  every vertex of $Y$ is called a
complete bipartite graph. If $|X|=p$ and $|Y|=q$, then
the complete bipartite graph is denoted by $K_{p,q}$.\par

The cartesian product of two graphs $G$ and $H$,
denoted by $G \square H$, is the graph with vertex set $V(G) \times
V(H)$ and any two vertices $(u_1, v_1)$ and $(u_2, v_2)$ are
adjacent in $G \square H$ if \emph{(i)} $u_1 = u_2$ and $v_1v_2
\in E(H)$, or \emph{(ii)} $u_1u_2 \in E(G)$ and $v_1 = v_2$. If $G=P_{m}$ and $H=P_{n}$, then the cartesian product $G \square H$ is called the $m\times n$ grid graph and is denoted by $G_{m,n}$. \par

The corona of two graphs $G_{1}=(V_{1},E_{1})$ and $G_{2}=(V_{2},E_{2})$, denoted by $G_{1}\odot G_{2}$, is the graph obtained by taking one copy of $G_{1}$ and $|V_{1}|$ copies of $G_{2}$, and then joining the $i^{th}$ vertex of $G_{1}$ to every vertex in the $i^{th}$ copy of $G_{2}$. \par

A rooted graph is a graph in which one vertex is labelled in a special way so as to distinguish it from other vertices. The special vertex is called the root of the graph. Let $G$ be a labelled graph on $n$ vertices. Let $H$ be a sequence of $n$ rooted graphs $H_{1}, H_{2},\ldots,H_{n}$. Then by $G(H)$ we denote the graph obtained by identifying the root of $H_{i}$ with the $i^{th}$ vertex of $G$. We call $G(H)$ the rooted product of $G$ by $H$ \cite{God}.\par

A Roman dominating function (RDF) on a graph $G=(V,E)$ is defined as a function $f:V\rightarrow \{0,1,2\}$ satisfying the condition that every vertex $v$ for which $f(v)=0$ is adjacent to at least one vertex $u$ for which $f(u)=2$. The weight of a RDF is the value $f(V)=\sum_{v\in V}f(v)$. The Roman domination number of a graph $G$, denoted by $\gamma_{R}(G)$, is the minimum weight of all possible RDFs on $G$.\par

Let $(V_{0}, V_{1}, V_{2}, V_{3})$ be the ordered partition of $V$ induced by a DRDF $f$, where $V_{i}=\{v\in V:f(v)=i\}$. Note that there exists a $1-1$ correspondence between the functions $f$ and the ordered partitions $(V_{0}, V_{1}, V_{2}, V_{3})$ of $V$. Thus we will write $f=(V_{0}, V_{1}, V_{2}, V_{3})$.\par

For any graph theoretic terminology and notations not mentioned here, the readers may refer to \cite{Bal}. The following propositions are useful in this paper.

\begin{prop}\label{1}
    \cite{Bee} In a double Roman dominating function of weight

    $\gamma_{dR}(G)$, no vertex needs to be assigned the value $1$.
\end{prop}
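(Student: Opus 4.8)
The plan is to argue by an extremal choice of minimizer. Among all double Roman dominating functions of weight $\gamma_{dR}(G)$, I would fix one, say $f=(V_0,V_1,V_2,V_3)$, for which the number $|V_1|$ of vertices assigned the value $1$ is as small as possible. The goal is to show that $V_1=\emptyset$ for this choice; if not, I derive a contradiction by producing either a strictly lighter DRDF or another $\gamma_{dR}$-function with strictly fewer $1$'s, either of which is impossible.

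The key preliminary observation, and the conceptual heart of the argument, is that a vertex of value $1$ is useless for dominating its neighbours. Indeed, condition (i) requires a value-$0$ vertex to see neighbours in $V_2$ or $V_3$, and condition (ii) requires a value-$1$ vertex to see a neighbour in $V_2\cup V_3$; in neither case does a neighbour of value $1$ ever contribute. Consequently, lowering the value of a single vertex from $1$ to $0$ can only endanger the DRDF condition at that vertex itself, never at any other vertex.

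With this in hand I would take any $v\in V_1$. By condition (ii), $v$ has a neighbour in $V_2\cup V_3$, and I split into two cases. If $v$ has a neighbour $u\in V_3$, I set $g(v)=0$ and leave $f$ unchanged elsewhere; by the observation above $g$ is still a DRDF (the vertex $v$ is now dominated by $u\in V_3$), but its weight is $\gamma_{dR}(G)-1$, contradicting the minimality of the weight. Otherwise every heavy neighbour of $v$ lies in $V_2$; choosing one such $u$, I set $g(v)=0$ and $g(u)=3$, leaving $f$ unchanged elsewhere. Raising $u$ to $3$ can only relax every other vertex's requirement, and $v$ is now dominated by $u\in V_3^{g}$, so $g$ is a DRDF of the same weight $\gamma_{dR}(G)$ but with $|V_1^{g}|=|V_1|-1$, contradicting the minimality of $|V_1|$.

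The step I expect to require the most care is the verification that each modified function $g$ is genuinely a DRDF, in particular confirming that dropping $v$ to $0$ leaves the conditions at all neighbours of $v$ intact. This is exactly where the \emph{value $1$ never helps} observation does the work, so once that fact is stated cleanly the two-case analysis should close routinely.
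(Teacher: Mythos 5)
Your proof is correct. The paper states this proposition without proof, citing it from Beeler, Haynes and Hedetniemi, and your argument --- pick a minimum-weight DRDF minimizing $|V_1|$, drop a vertex $v\in V_1$ to $0$ outright when it has a $V_3$ neighbour (weight contradiction), and otherwise promote a $V_2$ neighbour to $3$ while dropping $v$ to $0$ (same weight, fewer $1$'s), justified by the observation that value-$1$ vertices never help dominate others and that raising values never violates the conditions --- is essentially the standard argument from that reference.
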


Hence, without loss of generality, in determining the value
$\gamma_{dR}(G)$  we can assume that $V_{1}=\phi$ for all double
Roman dominating functions under consideration.

\begin{prop}\label{2}
    \cite{Bee} For any graph $G$, $2\gamma(G)\leqslant \gamma_{dR}(G) \leqslant 3\gamma(G)$.
\end{prop}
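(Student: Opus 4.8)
The plan is to establish the two inequalities separately, exploiting the tight link between a dominating set and the support of a double Roman dominating function. For the upper bound I would produce an explicit DRDF from a minimum dominating set, and for the lower bound I would extract a dominating set from an optimal DRDF, using Proposition~\ref{1} to clean up the latter first.

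For the upper bound $\gamma_{dR}(G)\leqslant 3\gamma(G)$, let $D$ be a minimum dominating set, so $|D|=\gamma(G)$. Define $f$ by setting $f(v)=3$ for every $v\in D$ and $f(v)=0$ otherwise. I would check that $f$ is a DRDF: any vertex $v$ with $f(v)=0$ lies outside $D$, and since $D$ dominates $G$, the vertex $v$ has a neighbour in $D$, i.e. a neighbour in $V_3$, so condition~(i) holds; condition~(ii) is vacuous because no vertex receives the value $1$. Hence $f$ is a DRDF of weight $3|D|=3\gamma(G)$, and by minimality of $\gamma_{dR}(G)$ the bound follows.

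For the lower bound $2\gamma(G)\leqslant \gamma_{dR}(G)$, let $f=(V_{0},V_{1},V_{2},V_{3})$ be a $\gamma_{dR}$-function, and invoke Proposition~\ref{1} to assume $V_{1}=\phi$, so that $\gamma_{dR}(G)=2|V_{2}|+3|V_{3}|$. The key step is to verify that $S=V_{2}\cup V_{3}$ is a dominating set. Every vertex of $S$ dominates itself, while any vertex $v\in V_{0}$ has, by condition~(i), either two neighbours in $V_{2}$ or one neighbour in $V_{3}$; in both cases $v$ has a neighbour in $S$. Since $V_{1}=\phi$, these exhaust all vertices, so $N[S]=V$ and $\gamma(G)\leqslant |S|=|V_{2}|+|V_{3}|$.

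Combining this with the weight expression gives
\[
\gamma_{dR}(G)=2|V_{2}|+3|V_{3}|\geqslant 2\bigl(|V_{2}|+|V_{3}|\bigr)\geqslant 2\gamma(G),
\]
which finishes the proof. I do not expect a genuine obstacle here; the only point requiring care is the translation of the DRDF condition on $V_{0}$ into the statement that $V_{2}\cup V_{3}$ dominates, and the preliminary reduction to $V_{1}=\phi$ so that the weight splits cleanly as $2|V_{2}|+3|V_{3}|$.
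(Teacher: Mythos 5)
Your proof is correct. The paper itself states this proposition without proof, citing Beeler, Haynes and Hedetniemi \cite{Bee}; your argument (assign $3$ to a minimum dominating set for the upper bound, and observe that $V_{2}\cup V_{3}$ is a dominating set with $\gamma_{dR}(G)=2|V_{2}|+3|V_{3}|$ after reducing to $V_{1}=\phi$ via Proposition~\ref{1} for the lower bound) is exactly the standard one, and indeed the paper reuses the same key fact --- that $V_{2}\cup V_{3}$ dominates $G$ --- in its proof of the proposition immediately following this one.
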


\begin{prop}\label{3}
    \cite{Abd1} For $n\geqslant 3$,
    \[
    \gamma_{dR}(C_{n})=
    \begin{cases}
    n, & \text{if } n\equiv 0,\,2,3,4\,(mod \,6),\\
    n+1,& \text{if }n\equiv 1,\,5\,  (mod \,6).\\
        \end{cases}
    \]
\end{prop}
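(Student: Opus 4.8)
The plan is to establish matching lower and upper bounds. Throughout, by Proposition \ref{1} I may assume $V_1=\phi$, so every vertex of $C_n$ carries a value in $\{0,2,3\}$, and I label the vertices cyclically as $v_0,v_1,\dots,v_{n-1}$. Two periodic assignments are the efficient building blocks: the pattern $(2,0)$ repeated, which is a valid DRDF of weight $n$ whenever $2\mid n$ (each $0$ sits between two $2$'s), and the pattern $(3,0,0)$ repeated, valid of weight $n$ whenever $3\mid n$ (each $0$ is adjacent to a $3$). Since $2\mid n$ covers $n\equiv 0,2,4\pmod 6$ and $3\mid n$ covers $n\equiv 0,3\pmod 6$, together they realise weight $n$ for all $n\equiv 0,2,3,4\pmod 6$. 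For $n\equiv 1\pmod 6$ I would take $\lfloor n/3\rfloor$ copies of $(3,0,0)$ and break the single over-long gap with one extra vertex of value $2$, giving weight $n+1$; for $n\equiv 5\pmod 6$ I would place threes periodically and patch the residual gap with one extra $3$, again giving weight $n+1$. A few small explicit checks (e.g.\ $C_5$ and $C_7$) confirm the patched constructions are valid DRDFs.

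For the lower bound I would first prove $\gamma_{dR}(C_n)\ge n$ for every $n$ by a discharging argument. Give each vertex the initial charge $f(v)$, so the total charge is $f(V)$; then let every vertex of value $3$ send charge $1$ to each of its two neighbours and every vertex of value $2$ send charge $\tfrac12$ to each neighbour. A $3$-vertex retains at least $3-2=1$, a $2$-vertex retains at least $2-1=1$, and a $0$-vertex, which by the DRDF condition has either a $3$-neighbour or two $2$-neighbours, receives at least $1$. Hence every final charge is at least $1$ and $f(V)\ge n$.

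The heart of the argument, and the step I expect to be the main obstacle, is to show that weight $n$ is unattainable exactly when $n\equiv 1,5\pmod 6$. I would analyse the equality case $f(V)=n$, where every final charge must be \emph{exactly} $1$. Tightness at the nonzero vertices forces each $2$ and each $3$ to receive charge $0$, so no two nonzero vertices are adjacent; consequently the nonzero vertices split the cycle into gaps (maximal runs of zeros), each of length $1$ or $2$ since three consecutive zeros would leave the middle one uncovered. A gap of length $2$ forces both its bounding vertices to be $3$'s (each zero in it has only one nonzero neighbour and so needs a $3$), while tightness at a length-$1$ gap forces both bounding vertices to be $2$'s (a bounding $3$ would over-deliver charge). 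Since each nonzero vertex bounds two gaps, these forcing rules must agree around the whole cycle, leaving only two possibilities: all gaps of length $1$, i.e.\ the pattern $(2,0)$ with $2\mid n$, or all gaps of length $2$, i.e.\ the pattern $(3,0,0)$ with $3\mid n$.

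Combining the pieces finishes the proof: weight $n$ is achievable if and only if $2\mid n$ or $3\mid n$, which is precisely $n\equiv 0,2,3,4\pmod 6$; for the remaining residues $n\equiv 1,5\pmod 6$ the equality analysis rules out weight $n$, so $\gamma_{dR}(C_n)\ge n+1$, matched by the patched constructions above. The delicate point to get right is the propagation step---verifying that the local forcing coming from individual gaps is globally consistent only for the two homogeneous patterns---so that the two exceptional residue classes emerge cleanly rather than through ad hoc case checking.
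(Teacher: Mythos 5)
The paper itself contains no proof of this proposition: it is quoted from \cite{Abd1}, so there is no internal argument to compare against, and your proposal must be judged on its own merits. Judged so, it is correct and complete. The constructions are valid: the pattern $(2,0)$ repeated works for $2\mid n$, the pattern $(3,0,0)$ for $3\mid n$, and the patched versions achieve $n+1$ in the remaining classes (for $n=6k+1$, threes at $v_0,v_3,\dots,v_{6k-3}$ leave a single gap $v_{6k-2},v_{6k-1},v_{6k}$, and placing the extra $2$ on the \emph{middle} vertex $v_{6k-1}$ of that gap makes everything dominated; for $n=6k+5$, threes at $v_0,v_3,\dots,v_{6k+3}$ leave one gap of length $1$, already covered). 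The discharging lower bound is sound --- charge is conserved, Proposition \ref{1} lets you restrict to $V_1=\emptyset$, and each vertex ends with at least $1$ --- and your equality analysis checks out in detail: exact final charge $1$ at each $2$ and $3$ forces the nonzero set to be independent; a zero with two nonzero neighbours must see exactly two $2$'s (since $3+2$ delivers $3/2$ and $3+3$ delivers $2$); a zero with one nonzero neighbour needs a $3$; gaps of length at least $3$ are impossible; and since every nonzero vertex bounds a gap on each side, the two local rules propagate around the cycle, leaving only the homogeneous patterns, i.e.\ weight $n$ is attainable iff $2\mid n$ or $3\mid n$, exactly $n\equiv 0,2,3,4\pmod 6$. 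The one degenerate case worth a sentence in a final write-up is a weight-$n$ DRDF with a single nonzero vertex (one gap bounded on both sides by the same vertex), which forces $n=3$ and is consistent with your conclusion. Compared with the cited literature, where this value is usually derived by explicit case analysis or induction on $n$ (often via the corresponding result for paths), your discharging-plus-extremal-configuration route is a genuinely different and rather clean way to see why precisely the residues $1,5\pmod 6$ are exceptional, at the cost of the careful tightness bookkeeping you correctly identified as the delicate step.
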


\begin{prop}\label{4}
    \cite{Bee} For any nontrivial connected graph $G$, $\gamma_{R}(G)< \gamma_{dR}(G) < 2\gamma_{R}(G)$.
\end{prop}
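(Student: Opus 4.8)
The plan is to prove the two strict inequalities separately, in each case by exhibiting an explicit weight-controlled transformation between Roman dominating functions and double Roman dominating functions. Throughout I will invoke Proposition~\ref{1} to assume that the double Roman dominating functions under consideration satisfy $V_1=\phi$, so that a $\gamma_{dR}$-function has weight $2|V_2|+3|V_3|$.

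For the lower bound $\gamma_R(G)<\gamma_{dR}(G)$, I would start from a $\gamma_{dR}$-function $f=(V_0,\phi,V_2,V_3)$ and define an RDF $g$ by setting $g=2$ on $V_2\cup V_3$ and $g=0$ on $V_0$. This is a valid RDF, since every vertex of $V_0$ has either two neighbors in $V_2$ or one neighbor in $V_3$, and hence a neighbor of $g$-value $2$; its weight is $2|V_2|+2|V_3|=\gamma_{dR}(G)-|V_3|$. When $V_3\neq\phi$ this already gives the strict inequality. The remaining case $V_3=\phi$ forces $f=(V_0,\phi,V_2,\phi)$ with $V_2\neq\phi$ (the graph being nontrivial) and every vertex of $V_0$ having at least two neighbors in $V_2$; here I would instead pick one vertex $u\in V_2$, lower its value to $1$, and keep $g=2$ on $V_2\setminus\{u\}$ and $g=0$ on $V_0$. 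Each $V_0$-vertex still retains a neighbor of value $2$, so this is an RDF of weight $2|V_2|-1=\gamma_{dR}(G)-1$, again strictly smaller.

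For the upper bound $\gamma_{dR}(G)<2\gamma_R(G)$, I would run the reverse transformation: from a $\gamma_R$-function $h=(W_0,W_1,W_2)$ define a DRDF $f$ by $f=3$ on $W_2$, $f=2$ on $W_1$, and $f=0$ on $W_0$. The only vertices carrying a condition are those of value $0$, and each such vertex lies in $W_0$ and therefore has a neighbor in $W_2$ of $f$-value $3$; thus $f$ is a DRDF of weight $3|W_2|+2|W_1|=2\gamma_R(G)-|W_2|$. This is strictly below $2\gamma_R(G)$ as soon as $W_2\neq\phi$.

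The one delicate point---and the step I expect to be the main obstacle---is ruling out the degenerate situation in which the chosen Roman function has $W_2=\phi$. If $W_2=\phi$, then the RDF condition forbids any vertex of value $0$, so $h\equiv 1$ and $\gamma_R(G)=|V(G)|$. I would dispose of this by observing that, since $G$ is nontrivial and connected, it has an edge $uv$; assigning $2$ to $u$, $0$ to $v$, and $1$ to every other vertex yields an RDF of weight $|V(G)|=\gamma_R(G)$ that does use the value $2$. Hence one may always select a $\gamma_R$-function with $W_2\neq\phi$, and the upper bound follows. Combining the two parts gives $\gamma_R(G)<\gamma_{dR}(G)<2\gamma_R(G)$.
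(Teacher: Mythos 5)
Your proof is correct: both weight-controlled transformations are valid, and your treatment of the two degenerate cases --- demoting one $V_{2}$-vertex to value $1$ when $V_{3}=\phi$ to keep the lower bound strict, and re-selecting a $\gamma_{R}$-function with $W_{2}\neq\phi$ via an edge of the nontrivial connected graph to keep the upper bound strict --- is exactly where the strictness is won. Note that the paper itself states Proposition \ref{4} without proof, citing \cite{Bee}; your argument is essentially the standard one from that reference (translate a $\gamma_{dR}$-function with $V_{1}=\phi$ into an RDF, and a $\gamma_{R}$-function into a DRDF via $W_{2}\mapsto 3$, $W_{1}\mapsto 2$), so there is nothing in-paper to diverge from.
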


\section{Cartesian Product}
The Roman domination number of cartesian product graphs is studied in \cite{Yer}. As far as we know, there are no results on the double Roman domination  number of cartesian product graphs. In \cite{Bee}, it is proved that for every graph $G$, $\gamma_{R}(G)< \gamma_{dR}(G)$. Also it is proved in \cite{Wu} that $\gamma_{R}(G\square H)\geqslant \gamma(G) \gamma(H)$. Hence we can deduce a general relationship between the double Roman domination number of cartesian product graphs and the domination number of its factors as follows:
\[ \gamma_{dR}(G\square H)> \gamma(G)\gamma(H) \]

\begin{prop}
    Let $G$ be a graph. For any $\gamma_{dR}$-function $f=(V_{0}, V_{2}, V_{3})$ of $G$,\\
    (i) $|V_{3}|\leqslant \gamma_{dR}(G)-2\gamma(G)$ and\\
    (ii) $|V_{2}|\geqslant 3\gamma(G)-\gamma_{dR}(G)$.
\end{prop}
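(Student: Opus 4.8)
The plan is to reduce both inequalities to a single structural fact: the set $V_2 \cup V_3$ is a dominating set of $G$. First I would invoke Proposition \ref{1} to justify writing the $\gamma_{dR}$-function as $f = (V_0, V_2, V_3)$ with $V_1 = \phi$, so that $V = V_0 \cup V_2 \cup V_3$ and the weight assumes the clean form
\[ \gamma_{dR}(G) = f(V) = 2|V_2| + 3|V_3|. \]

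Next I would verify the domination claim. By the definition of a DRDF, every vertex $v \in V_0$ has either a neighbor in $V_3$ or two neighbors in $V_2$; in either case $v$ is adjacent to some vertex of $V_2 \cup V_3$. Since each vertex of $V_2 \cup V_3$ lies in the set itself and $V_1 = \phi$, the closed neighborhood $N[V_2 \cup V_3]$ is all of $V$. Hence $V_2 \cup V_3$ is a dominating set, and by the definition of $\gamma(G)$,
\[ |V_2| + |V_3| \geq \gamma(G). \]

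With this inequality in hand, both parts follow by elementary rearrangement. For (i) I would rewrite the weight as $\gamma_{dR}(G) = 2(|V_2| + |V_3|) + |V_3|$ and substitute the domination bound to obtain $\gamma_{dR}(G) \geq 2\gamma(G) + |V_3|$, which is exactly $|V_3| \leq \gamma_{dR}(G) - 2\gamma(G)$. For (ii) I would instead group as $\gamma_{dR}(G) = 3(|V_2| + |V_3|) - |V_2|$, giving $\gamma_{dR}(G) \geq 3\gamma(G) - |V_2|$ and hence $|V_2| \geq 3\gamma(G) - \gamma_{dR}(G)$.

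The argument is short, and the only step requiring any care is the verification that $V_2 \cup V_3$ dominates $G$; everything else is bookkeeping, so I do not anticipate a genuine obstacle. It is worth recording that part (i) refines the lower bound of Proposition \ref{2}: since $|V_3| \geq 0$, inequality (i) immediately recovers $\gamma_{dR}(G) \geq 2\gamma(G)$.
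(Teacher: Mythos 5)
Your proposal is correct and follows essentially the same route as the paper: both observe that $V_{2}\cup V_{3}$ is a dominating set, so $\gamma(G)\leqslant |V_{2}|+|V_{3}|$, and then obtain (i) and (ii) by the same algebraic regrouping of $\gamma_{dR}(G)=2|V_{2}|+3|V_{3}|$. Your write-up merely spells out the domination check that the paper asserts in one line, which is a harmless (indeed welcome) addition.
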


\begin{proof}
    Since $V_{2}\cup V_{3}$ is a dominating set for $G$, we have $\gamma(G)\leqslant |V_{2}|+|V_{3}|$. So, $2\gamma(G)\leqslant 2|V_{2}|+2|V_{3}|= \gamma_{dR}(G)-|V_{3}|$, and hence (i) is deduced. Also, $3\gamma(G)\leqslant 3|V_{2}|+3|V_{3}|=\gamma_{dR}(G)+|V_{2}|$, and hence (ii) is obtained.
\end{proof}

\begin{thm}\label{drdncartesian}
    For any graphs $G$ and $H$, $\gamma_{dR}(G\square H)\geqslant \frac{\gamma(G)\gamma_{dR}(H)}{2}$.

\end{thm}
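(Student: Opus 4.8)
The plan is to bound the weight of an optimal double Roman dominating function of $G\square H$ from below by projecting it onto $H$ along a minimum dominating set of $G$. First I would fix a $\gamma_{dR}$-function $f$ of $G\square H$ and, invoking Proposition \ref{1}, assume $V_{1}=\phi$, so that $f(x)\in\{0,2,3\}$ for every vertex $x$. Writing $V(G)=\{u_{1},\dots,u_{m}\}$, for each $i$ let $f_{i}:V(H)\to\{0,2,3\}$ be the restriction $f_{i}(v)=f((u_{i},v))$ to the $i$-th $H$-layer, and let $w_{i}=\sum_{v}f_{i}(v)$, so that $\gamma_{dR}(G\square H)=\sum_{i}w_{i}$. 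The factor $\gamma(G)$ will be produced by a minimum dominating set $D=\{u_{i_{1}},\dots,u_{i_{k}}\}$ of $G$ (where $k=\gamma(G)$): assigning every vertex of $G$ to a chosen dominator in $D$ partitions $V(G)$ into $k$ clusters $C_{1},\dots,C_{k}$ with $u_{i_{t}}\in C_{t}\subseteq N[u_{i_{t}}]$.

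Next I would merge each cluster into a single function on $H$ by setting $g_{t}(v)=\min\{3,\sum_{u_{i}\in C_{t}}f_{i}(v)\}$. Because $f$ takes no value $1$, neither does $g_{t}$, and since the clusters partition $V(G)$ we obtain the clean bookkeeping inequality $\sum_{t=1}^{k}w(g_{t})\le\sum_{i}w_{i}=\gamma_{dR}(G\square H)$. The heart of the construction is that $g_{t}$ is nearly a DRDF of $H$: if $g_{t}(v)=0$, then every layer of the cluster is $0$ at level $v$, so $(u_{i_{t}},v)$ must be double Roman dominated in $G\square H$, and whenever this domination is realised from within the cluster's layers (a neighbour of value $3$, or two of value $2$, among the $(u_{i},v')$ with $u_{i}\in C_{t}$ and $v'\sim v$) it carries over to $g_{t}$. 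The only way $g_{t}$ can fail is on the set $P_{t}$ of levels $v$ where the cluster is identically $0$ and $(u_{i_{t}},v)$ receives no double Roman domination from within the cluster, so that it is dominated only by vertices $(u_{j},v)$ with $u_{j}\sim u_{i_{t}}$ and $u_{j}\notin C_{t}$. Raising $g_{t}$ to the value $2$ on $P_{t}$ repairs it: the resulting $\hat g_{t}$ is a genuine DRDF of $H$, whence $\gamma_{dR}(H)\le w(\hat g_{t})=w(g_{t})+2\,|P_{t}|$. Summing over $t$ gives $\gamma(G)\,\gamma_{dR}(H)\le\sum_{t}w(g_{t})+2\sum_{t}|P_{t}|\le\gamma_{dR}(G\square H)+2\sum_{t}|P_{t}|$.

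It then remains to absorb the repair term, that is, to prove $2\sum_{t}|P_{t}|\le\gamma_{dR}(G\square H)$, and this is the step I expect to be the main obstacle. The idea is to charge each pair $(t,v)$ with $v\in P_{t}$ to the external weight that double Roman dominates $(u_{i_{t}},v)$ in level $v$: by definition there is, among the neighbours of $u_{i_{t}}$ lying outside $C_{t}$, either a vertex of value $3$ or two vertices of value $2$, hence weight at least $2$ sitting on the heavy vertices $X_{v}=\{i:f_{i}(v)\ge 2\}$. Since every heavy vertex contributes at least $2$ to the weight, $\sum_{v}|X_{v}|\le\tfrac12\gamma_{dR}(G\square H)$, so it suffices to show that this charging is essentially injective, i.e.\ that a single heavy vertex in level $v$ cannot be forced to pay for too many distinct problematic clusters. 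This is exactly where the minimality of $D$ must be used: a heavy helper lies in its own cluster and therefore can never make that cluster problematic, and the private-neighbour structure of a minimum dominating set limits how many cluster-centres a single helper can serve, the surviving slack being absorbed by the factor $2$. Making this counting precise---handling in particular the thin (for instance singleton) clusters whose centre is dominated entirely from neighbouring clusters, as already occurs for $G=C_{4}$---is the delicate part, whereas the merging construction and the inequality $\sum_{t}w(g_{t})\le\gamma_{dR}(G\square H)$ are routine.
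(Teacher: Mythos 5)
Your construction up to the inequality $\gamma(G)\gamma_{dR}(H)\leqslant\gamma_{dR}(G\square H)+2\sum_{t}|P_{t}|$ is sound, and it is essentially the first half of the paper's own proof (the paper merges each cluster by the projected maximum $f_{i}(v)=\max\{f(u,v):u\in A_{i}\}$ rather than your truncated sum, an inessential difference; your repair step is also fine, since every failure of $g_{t}$ lies in $P_{t}$ and raising $0$ to $2$ creates no new violations). But the step you explicitly defer --- proving $\sum_{t}|P_{t}|\leqslant\sum_{v}|X_{v}|=|V_{2}\cup V_{3}|$, equivalently the per-level bound $t_{v}\leqslant|X_{v}|$ --- is the real content of the theorem, and the mechanism you gesture at (``the private-neighbour structure of a minimum dominating set limits how many cluster-centres a single helper can serve'') is not what makes it true: nothing prevents a single heavy vertex from being adjacent to arbitrarily many centres, so no local injectivity count of this kind goes through. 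What the paper uses instead is a global exchange argument: for each fixed level $v$, the set $S_{v}=(S-\{u_{i}\in S:v\in Y_{0}^{(i)}\})\cup X_{v}$ is shown to be a dominating set of $G$, and minimality of $S$ then forces $t_{v}\leqslant|X_{v}|$, since otherwise $|S_{v}|<\gamma(G)$; summing over $v$ and using $2(|V_{2}|+|V_{3}|)\leqslant 3|V_{3}|+2|V_{2}|=\gamma_{dR}(G\square H)$ closes the proof. So the missing idea is a swap against the minimum dominating set, not a counting of how many centres one helper can serve.

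Moreover, your definition of $P_{t}$ is too weak to support that exchange even if you had thought of it. You declare $v$ problematic when the \emph{centre} $(u_{i_{t}},v)$ lacks in-cluster double Roman domination; the paper instead defines $Y_{0}^{(i)}$ through the projected function: $v\in Y_{0}^{(i)}$ iff $f_{i}(v)=0$, $N(v)\cap X_{3}^{(i)}=\emptyset$ and $|N(v)\cap X_{2}^{(i)}|\leqslant 1$, a condition on the cluster-wide maxima. That stronger condition guarantees that \emph{every} vertex $u\in A_{i}$, not only the centre, has at level $v$ an external neighbour in $X_{v}$, and this is exactly what makes $S_{v}$ dominate the whole cluster $A_{i}$ once its centre is deleted. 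With your centre-only $P_{t}$, a non-centre vertex of a problematic cluster may be double Roman dominated entirely from within its own column and have no neighbour in $X_{v}$, so $S_{v}$ need not be dominating and the exchange collapses. Note also that $Y_{0}^{(t)}\subseteq P_{t}$, so your repair is valid but your repair bill $2\sum_{t}|P_{t}|$ is larger than the paper's, and it is precisely this larger bill that your charging cannot be shown to pay. Both defects are repaired by adopting the paper's cluster-wide definition together with the exchange argument; as written, however, the proposal does not contain a proof of the key inequality.
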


\begin{proof}
    Let $V(G)$ and $V(H)$ be the vertex sets of $G$ and $H$ respectively. Let $f=(V_{0}, V_{2}, V_{3})$ be a $\gamma_{dR}$-function of $G\square H$. Let $S=\{u_{1}, u_{2},\ldots,u_{\gamma(G)}\}$ be a dominating set for $G$. Let $\{A_{1}, A_{2},\ldots,A_{\gamma(G)}\}$ be a vertex partition of $G$ such that $u_{i}\in A_{i}$ and $A_{i}\subseteq  N[u_{i}]$ (Note that this partition always exists and it may not be unique). Let $\{\Pi_{1}, \Pi_{2},\ldots,\Pi_{\gamma(G)}\}$ be the vertex partition of $G\square H$ such that $\Pi_{i}=A_{i}\times V(H)$ for every $i\in \{1, 2,\ldots,\gamma(G)\}$.\par

    For every $i\in \{1,2,\ldots,\gamma(G)\}$, let $f_{i}:V(H)\rightarrow \{0, 2, 3 \}$ be a function such that $f_{i}(v)=max\{f(u,v): u\in A_{i}\}$. For every $j\in \{0,2,3\}$, let $X_{j}^{(i)}=\{v\in V(H): f_{i}(v)=j\}$. Let $Y_{0}^{(i)}=\{x\in X_{0}^{(i)}: |N(x)\cap X_{2}^{(i)}|\leqslant 1$ and $ N(x)\cap X_{3}^{(i)}=\phi \}$. Hence, we have that $f_{i}'=(X_{0}^{(i)}-Y_{0}^{(i)}, X_{2}^{(i)}+Y_{0}^{(i)},X_{3}^{(i)})$ is a double Roman dominating function on $H$. Thus,
    \begin{eqnarray*}
        \gamma_{dR}(H) &\leqslant& 3|X_{3}^{(i)}|+2|X_{2}^{(i)}|+2|Y_{0}^{(i)}|\\
        &\leqslant& 3|V_{3}\cap \Pi_{i}|+2|V_{2}\cap \Pi_{i}|+2|Y_{0}^{(i)}|.
    \end{eqnarray*}
    Hence,
    \begin{eqnarray*}
        \gamma_{dR}(G\square H) &=& 3|V_{3}|+2|V_{2}|\\
        &=&\sum_{i=1}^{\gamma(G)}[3|V_{3}\cap \Pi_{i}|+2|V_{2}\cap \Pi_{i}|]\\
        &\geqslant& \sum_{i=1}^{\gamma(G)}[ \gamma_{dR}(H)-2|Y_{0}^{(i)}|]   \\
        &=&\gamma(G)\gamma_{dR}(H)-2\sum_{i=1}^{\gamma(G)}|Y_{0}^{(i)}|.
    \end{eqnarray*}

    So,
    \begin{equation}
        \sum_{i=1}^{\gamma(G)}|Y_{0}^{(i)}| \geqslant \frac{1}{2}[\gamma(G)\gamma_{dR}(H)-\gamma_{dR}(G\square H)].
    \end{equation}
    Now, for every $v\in V(H)$, let $Z^{v}\in \{0,1\}^{\gamma(G)}$ be a binary vector associated to $v$ as follows:

    \[Z_{i}^{v}=
    \begin{cases}
    1, & if\ \ v\in Y_{0}^{(i)},\\
    0,&  otherwise.
    \end{cases}
    \]
    Let $t_{v}$ be the number of components of $Z^{v}$ equal to one. Hence,
    \begin{equation}\sum_{v\in V(H)}t_{v}=\sum_{i=1}^{\gamma(G)}|Y_{0}^{(i)}|. \end{equation}
    Note that, if $Z_{i}^{v}=1$ and $u\in A_{i}$, then vertex $(u,v)$ belongs to $V_{0}$. Moreover $(u,v)$ is not adjacent to any vertex of $V_{3}\cap \Pi_{i}$ and is adjacent to at most one vertex of $V_{2}\cap \Pi_{i}$. So, since $V_{0}$ is double Roman dominated by $V_{2}\cup V_{3}$, there exists $u' \in X_{v}=\{x\in V(G): (x,v)\in V_{2}\cup V_{3}\}$ which is adjacent to $u$. Hence, $S_{v}=(S-\{u_{i}\in S:Z_{i}^{v}=1\})\cup X_{v}$   is a dominating set for $G$.\par
    Now, if $t_{v}>|X_{v}|$, then we have
    \begin{eqnarray*}
        |S_{v}| &\leqslant& |S|-t_{v}+|X_{v}|\\
        &=& \gamma(G)-t_{v}+|X_{v}| \\
        &<&  \gamma(G)-t_{v}+t_{v}=\gamma(G),
    \end{eqnarray*}
    which is  a contradiction. So, we have $t_{v}\leq |X_{v}|$ and we obtain
    \[\sum_{v\in V(H)}t_{v}\leq \sum_{v\in V(H)} |X_{v}|=|V_{2}\cup V_{3}|\]
    which leads to
    \begin{equation}
        2\sum_{v\in V(H)}t_{v}\leq 2|V_{2}|+2|V_{3}|\leq \gamma_{dR}(G\square H).
    \end{equation}

    Thus, by (1), (2) and (3), we deduce $\gamma_{dR}(G\square H)\geqslant \frac{\gamma(G)\gamma_{dR}(H)}{2}$.
\end{proof}

Proposition \ref{2} and Theorem \ref{drdncartesian} lead to the following result.

\begin{cor}
    For any graphs $G$ and $H$, $\gamma_{dR}(G\square H)\geqslant\frac{\gamma_{dR}(G)\gamma_{dR}(H)}{6}$.
\end{cor}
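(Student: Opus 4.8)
The plan is to obtain the bound by a single substitution, chaining the two results the corollary explicitly cites. Theorem~\ref{drdncartesian} already delivers a lower bound on $\gamma_{dR}(G\square H)$ of the form $\frac{\gamma(G)\gamma_{dR}(H)}{2}$, which is expressed in terms of the \emph{domination number} $\gamma(G)$ of the first factor and the \emph{double Roman domination number} $\gamma_{dR}(H)$ of the second. Since the target inequality is phrased entirely in terms of $\gamma_{dR}(G)$ and $\gamma_{dR}(H)$, all that remains is to trade the factor $\gamma(G)$ for an expression involving $\gamma_{dR}(G)$, and Proposition~\ref{2} is exactly the tool for this.

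Concretely, I would first invoke Proposition~\ref{2}, and in particular its \emph{upper} estimate $\gamma_{dR}(G)\leqslant 3\gamma(G)$. Rearranging this gives $\gamma(G)\geqslant \frac{\gamma_{dR}(G)}{3}$, a lower bound on $\gamma(G)$. Substituting this into the conclusion of Theorem~\ref{drdncartesian} yields
\[
\gamma_{dR}(G\square H)\;\geqslant\;\frac{\gamma(G)\,\gamma_{dR}(H)}{2}\;\geqslant\;\frac{1}{2}\cdot\frac{\gamma_{dR}(G)}{3}\cdot\gamma_{dR}(H)\;=\;\frac{\gamma_{dR}(G)\,\gamma_{dR}(H)}{6},
\]
which is the desired statement. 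The substitution is valid because $\gamma_{dR}(H)\geqslant 0$, so multiplying the inequality $\gamma(G)\geqslant \frac{\gamma_{dR}(G)}{3}$ by $\frac{\gamma_{dR}(H)}{2}$ preserves its direction.

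There is no genuine obstacle here; the only point requiring care is selecting the correct half of Proposition~\ref{2}. The proposition bundles two inequalities, $2\gamma(G)\leqslant\gamma_{dR}(G)\leqslant 3\gamma(G)$, and it is tempting to reach for the more familiar lower bound $2\gamma(G)\leqslant\gamma_{dR}(G)$. That one, however, bounds $\gamma(G)$ from \emph{above} and would weaken Theorem~\ref{drdncartesian} in the wrong direction. What is needed is the upper bound $\gamma_{dR}(G)\leqslant 3\gamma(G)$, which converts into a \emph{lower} bound on $\gamma(G)$ and hence keeps the chain of inequalities pointing the right way. Once the correct half is chosen, the remainder is elementary arithmetic.
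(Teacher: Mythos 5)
Your proof is correct and is precisely the argument the paper intends: the corollary is stated as an immediate consequence of Proposition~\ref{2} and Theorem~\ref{drdncartesian}, obtained by substituting $\gamma(G)\geqslant \frac{\gamma_{dR}(G)}{3}$ (from the upper half $\gamma_{dR}(G)\leqslant 3\gamma(G)$) into the theorem's bound. Your remark about choosing the correct half of Proposition~\ref{2} is a sensible point of care, and the rest matches the paper's (implicit) one-line derivation exactly.
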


\begin{thm}
    For any graphs $G$ and $H$ of orders $n_{1}$ and $n_{2}$ respectively, $\gamma_{dR}(G\square H)\leqslant min\{n_{2
    }\gamma_{dR}(G),n_{1}\gamma_{dR}(H) \}$.
\end{thm}

\begin{proof}
    Let $f_{1}$ be a $\gamma_{dR}$-function of $G$. Let $f:V(G)\times V(H) \rightarrow \{0,2,3\}$ be a function defined by $f(u,v)=f_{1}(u)$. If there exists a vertex $(u,v)\in V(G)\times V(H)$ such that $f(u,v)=0$, then $f_{1}(u)=0$. Since $f_{1}$ is a $\gamma_{dR}$-function of $G$, there exists either $u_{1}\in N_{G}(u)$ such that $f_{1}(u_{1})=3$ or $u_{2},u_{3}\in N_{G}(u)$ such that $f_{1}(u_{2})=f_{1}(u_{3})=2$. Hence, we obtain that there exists either $(u_{1},v)\in N_{G \square H}((u,v))$ with $f((u_{1},v))=3$ or $(u_{2},v),(u_{3},v)\in N_{G \square H}((u,v))$ with $f((u_{2},v))=f((u_{3},v))=2$. So, $f$ is a DRDF of $G\square H$. Therefore,
    \begin{eqnarray*}
        \gamma_{dR}(G\square H) &\leqslant&  \sum_{(u,v)\in V(G)\times V(H)}f((u,v))\\
        &=& \sum_{v\in V(H)} \sum_{u\in V(G)}f_{1}(u)\\
        &=& \sum_{v\in V(H)}\gamma_{dR}(G)=n_{2}\gamma_{dR}(G).
    \end{eqnarray*}
    Similarly, we can prove that $\gamma_{dR}(G\square H)\leqslant n_{1}\gamma_{dR}(H)$ and hence the result is true.
\end{proof}

In \cite{Coc}, it is proved that for the $2\times n$ grid graph $G_{2,n}$, $\gamma_{R}(G_{2,n})=n+1$. Hence it is natural to study the double Roman domination number of grid graphs. For $n=2$, $G_{2,n}$ is $C_{4}$ and by proposition \ref{3},  $\gamma_{dR}(C_{4})=4$. So, in the next theorem, we omit the case when $n=2$.\\

\begin{thm}
    For the $2\times n$ grid graph $G_{2,n}, n\neq 2,\gamma_{dR}(G_{2,n})=\lfloor \frac{3n+4}{2}\rfloor$.
\end{thm}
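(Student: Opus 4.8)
The plan is to prove matching upper and lower bounds.

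\emph{Upper bound.} I would exhibit a DRDF of the claimed weight. Writing $(i,j)$ for the vertex in row $i\in\{1,2\}$ and column $j\in\{1,\dots,n\}$, the idea is to place the value $3$ along a diagonal zig-zag: set $f(1,1)=f(2,3)=f(1,5)=f(2,7)=\cdots=3$ (every second column, \emph{alternating} rows) and $0$ elsewhere. A direct check shows each such $3$ double Roman dominates its whole column together with its two horizontal neighbours; since consecutive threes sit in opposite rows, every even column $2k$ has one neighbouring $3$ in each row, so both of its vertices are served. For odd $n$ the columns $1,3,\dots,n$ all carry a $3$ and every vertex is covered, using $(n+1)/2$ threes of weight $(3n+3)/2=\lfloor(3n+4)/2\rfloor$. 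For even $n$ the same pattern on columns $1,3,\dots,n-1$ leaves exactly one vertex of column $n$ (the one not in the row of the last $3$) undominated, so I would additionally set that vertex to $2$, giving weight $3\cdot\tfrac n2+2=\tfrac{3n+4}{2}=\lfloor\tfrac{3n+4}2\rfloor$. The boundary-column check and parity bookkeeping are routine.

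\emph{Lower bound.} I would prove $\gamma_{dR}(G_{2,n})\ge\lfloor(3n+4)/2\rfloor$ by induction on $n$ in steps of two, the engine being the recursion
\[
\gamma_{dR}(G_{2,n})\ \ge\ \gamma_{dR}(G_{2,n-2})+3\qquad(n\ge3).
\]
Because $\lfloor(3n+4)/2\rfloor-\lfloor(3(n-2)+4)/2\rfloor=3$, this recursion together with the base values $\gamma_{dR}(G_{2,1})=3$ and $\gamma_{dR}(G_{2,4})=8$ (each a finite check, the case $n=2$ being genuinely exceptional) yields the bound for all admissible $n$. To establish the recursion I would take a $\gamma_{dR}$-function $f=(V_0,V_2,V_3)$ of $G_{2,n}$, using Proposition \ref{1} to assume $V_1=\phi$, and delete columns $n-1$ and $n$, whose combined weight I call $c_{n-1}+c_n$. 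The only vertices whose domination can be destroyed are $(1,n-2)$ and $(2,n-2)$, and only when such a vertex lies in $V_0$ and depended on a neighbour in column $n-1$. I would repair these as cheaply as possible: if a broken vertex still has a surviving neighbour in $V_2$, upgrade that neighbour from $2$ to $3$ at cost $1$; otherwise raise the broken vertex to $2$, or set one vertex of column $n-2$ to $3$, repairing both at once. Calling the total added weight $\delta$, the restriction-plus-repair is a DRDF of $G_{2,n-2}$ of weight at most $\gamma_{dR}(G_{2,n})-(c_{n-1}+c_n)+\delta$, so the recursion reduces to the inequality $c_{n-1}+c_n-\delta\ge3$.

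The main obstacle is exactly this last inequality, which I would verify by a case analysis on the small values of $c_{n-1}+c_n$. The delicate point is that cheap last-two-column configurations are forced to be self-sufficient: since $(1,n)$ and $(2,n)$ have no neighbour to the right, one checks that $c_{n-1}+c_n=3$ can occur only with a single $3$ whose placement already forces a compensating $3$ in column $n-2$, so that $\delta=0$; that when $c_{n-1}+c_n=4$ a broken vertex always retains a surviving $2$-neighbour, so the cost-$1$ upgrade gives $\delta\le1$; and that reliance on a genuine $3$ in column $n-1$ already pushes $c_{n-1}+c_n$ to at least $5$, leaving room for a cost-$2$ repair. Organising these cases so that the \emph{cheapest} repair is always charged---rather than the naive ``insert a fresh $2$'', which can fail to give enough savings---is where the argument must be handled with care.
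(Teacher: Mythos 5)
Your proposal is correct, and while your upper bound is exactly the paper's construction (values $3$ placed on alternating rows of the odd columns, plus a single $2$ in the last column when $n$ is even, for weight $\lfloor\frac{3n+4}{2}\rfloor$), your lower bound takes a genuinely different route. The paper argues directly: it fixes a minimum dominating set $\{x_{1},\ldots,x_{\gamma}\}$ of $G_{2,n}$, with $\gamma(G_{2,n})=\lceil\frac{n+1}{2}\rceil$, claims for odd $n$ that the closed neighborhoods $N[x_{i}]$ partition the vertex set, for even $n$ that any partition with $A_{i}\subseteq N[x_{i}]$ has at most one singleton block, and concludes that any DRDF must assign $3$ to each $x_{i}$ (with one $2$ allowed in the even case), giving $\frac{3(n+1)}{2}$ and $\frac{3n}{2}+2$ respectively. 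You instead prove the recursion $\gamma_{dR}(G_{2,n})\geq\gamma_{dR}(G_{2,n-2})+3$ by a deletion--repair argument on the last two columns, anchored at the bases $n=1$ and $n=4$, correctly sidestepping the exceptional $n=2$. Your case analysis holds up under scrutiny: with $V_{1}=\phi$ the weight $c_{n-1}+c_{n}$ on the deleted columns is at least $3$; weight exactly $3$ forces the single $3$ into column $n$ and a compensating $3$ in the opposite row of column $n-2$, so $\delta=0$; the viable weight-$4$ configurations (two $2$'s, never both in column $n-1$) leave at most one broken vertex, which retains a surviving $2$-neighbour, so $\delta\leq 1$; and any reliance of column $n-2$ on a $3$ in column $n-1$ forces $c_{n-1}+c_{n}\geq 5$ since a lone $3$ in column $n-1$ leaves the opposite-row vertex of column $n$ undominated, leaving room for the cost-$2$ (or, with two broken vertices, cost-$3$ against weight $\geq 6$) repair. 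What each approach buys: the paper's counting argument is short, but as written it leans on assertions that do not hold for arbitrary minimum dominating sets---already for $n=3$ the set $\{(1,2),(2,2)\}$ is a minimum dominating set whose closed neighborhoods overlap, and nothing a priori forces a DRDF to concentrate its weight on the chosen $x_{i}$'s---whereas your induction is longer but self-contained at every step, localizes all difficulty into a finite boundary analysis, and the deletion--repair template extends readily to $\gamma_{dR}$ of wider grids and similar product families; if anything, your route is the more rigorous of the two.
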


\begin{proof}

    Let the vertices of $G_{2,n}$ be denoted by $(u_{1},v_{1}),\ldots,(u_{1},v_{n}),(u_{2},v_{1}),\ldots,(u_{2},v_{n})$ and define a DRDF $f$ as follows: If $n$ is odd,
    \[f(u_{i},v_{j})=
    \begin{cases}
    3, & for\ \ i=1\  and\  j=3+4k;\ i=2\ and\ j=1+4k\  for\   k\geqslant 0\  and\ j\leqslant n ,\\
    0,& otherwise.
    \end{cases}
    \]
    If $n$ is even,
    \[f(u_{i},v_{j})=
    \begin{cases}
    3, & for\ \ i=1\  and\  j=3+4k;\ i=2\ and\ j=1+4k\  for\   k\geqslant 0\  and\ j< n,\\
    2, & for \ i=1\  and\ j=n,\ if\ \ n\equiv 2\ (mod\ 4);\ i=2\  and\  j=n,\ if\ n\equiv 0\ (mod\ 4),\\
    0,& otherwise.
    \end{cases}
    \]
    It can be easily verified that $f$ is a DRDF and
    \[f(V)=
    \begin{cases}
    \frac{3(n+1)}{2}, &if \ n\ is\ odd,\\

    \frac{3n}{2}+2,&if \ n\ is\ even.
    \end{cases}
    \]
    i.e., $f(V)=\lfloor \frac{3n+4}{2}\rfloor$ and hence $\gamma_{dR}(G_{2,n})\leq \lfloor \frac{3n+4}{2}\rfloor$.\par

    \begin{figure}[h]
        \includegraphics[width=12cm]{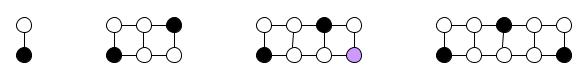}
        \caption{ \scriptsize {DRDF $f$ for $G_{2,n}, n=1,3,4,5$. Black circles denote vertices in $V_{3}$, grey circle denote vertex in $V_{2}$ and empty circles denote vertices in $V_{0}$.}}
        \label{fig:1}       
    \end{figure}

    For the reverse inequality, let $\{x_{1}, x_{2},\ldots,x_{\gamma}\}$ be any dominating set for $G_{2,n}$. If $n$ is odd, $\{N[x_{1}],N[x_{2}],\ldots,N[x_{\gamma}]\}$ is a partition of vertex set of $G_{2,n}$ and $|N[x_{i}]|\geqslant 3$, for $i=1,2,\ldots,\gamma$. So we have to give $3$ to each $x_{i}$, $i=1,2,\ldots,\gamma$, under any DRDF and hence $\gamma_{dR}(G_{2,n})\geqslant \frac{3(n+1)}{2}$. (Note that $\gamma(G_{2,n})=\lceil\frac{n+1}{2}\rceil$). If $n$ is even, let $\{A_{1}, A_{2},\ldots,A_{\gamma}\}$ be any partition of vertex set of $G_{2,n}$ such that $x_{i}\in A_{i}$ and $A_{i}\subseteq N[x_{i}]$, $i=1,2,\ldots,\gamma$. Then $|A_{i}|=1$ for at most one $i$, say $k$, and $|A_{i}|\geq 3$, $i\neq k$. So we have to give $3$ to each $x_{i}$, $i=1,2,\ldots,\gamma$; $i\neq k$ and $2$ to $x_{k}$ under any DRDF. Hence $\gamma_{dR}(G_{2,n})\geqslant \frac{3n}{2}+2$, if $n$ is even. Hence the result follows.
\end{proof}

\section{Corona Operator}
In this section, first we find the double Roman domination number of $G\odot H$, where $H\ncong K_{1}$, and obtain bounds for $\gamma_{dR}(G\odot K_{1})$. We also prove that these bounds are strict and obtain a realization for every value in the range of the bounds obtained. The exact values of $\gamma_{dR}(G\odot K_{1})$ where $G$  is a path, a cycle, a complete graph or a complete bipartite graph  are also obtained. Also we prove that the value of $\gamma_{dR}((G\odot K_{1})\odot K_{1})$ depends only on the number of vertices in $G$.

\begin{prop}
    For every graph $G$ and every $H\ncong K_{1}$, $\gamma_{dR}(G\odot H)=3n$, where $n=|V(G)|$.
\end{prop}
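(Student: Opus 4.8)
The plan is to establish the two inequalities $\gamma_{dR}(G\odot H)\le 3n$ and $\gamma_{dR}(G\odot H)\ge 3n$ separately. Throughout, I write $g_{1},\dots,g_{n}$ for the vertices of $G$ and $H_{i}$ for the copy of $H$ attached to $g_{i}$, so that $g_{i}$ is adjacent to every vertex of $H_{i}$, while each vertex of $H_{i}$ has $g_{i}$ as its \emph{only} neighbour outside $H_{i}$. This last feature is what forces domination to stay local within each attached copy.

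For the upper bound I would simply exhibit the function $f$ that assigns $3$ to each $g_{i}$ and $0$ to every vertex of every $H_{i}$. Each $g_{i}$ receives a positive value, and every vertex $v\in H_{i}$ with $f(v)=0$ is adjacent to $g_{i}$, which has value $3$; hence $f$ is a DRDF of weight $3n$, giving $\gamma_{dR}(G\odot H)\le 3n$.

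For the lower bound, the key observation is that the sets $B_{i}=\{g_{i}\}\cup V(H_{i})$ partition $V(G\odot H)$, so for any DRDF $f$ it suffices to show $w_{i}:=\sum_{v\in B_{i}}f(v)\ge 3$ for each $i$; summing then yields weight at least $3n$. By Proposition \ref{1} I may assume $V_{1}=\phi$, so all values lie in $\{0,2,3\}$, and I split into cases on $f(g_{i})$. If $f(g_{i})=3$ we are done immediately. The substance is in the cases $f(g_{i})=2$ and $f(g_{i})=0$, where I exploit that every vertex of $H_{i}$ can only be double Roman dominated using vertices of $B_{i}$ (since $g_{i}$ is its unique neighbour outside $H_{i}$), together with the fact that $H\ncong K_{1}$ forces $|V(H_{i})|\ge 2$, so there are always at least two such vertices to play against each other.

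The main obstacle, and the heart of the argument, is the case $f(g_{i})=0$. Here I would set $s_{i}:=\sum_{v\in V(H_{i})}f(v)$ and rule out $s_{i}\in\{0,2\}$. If $s_{i}=0$, then any $v\in H_{i}$ has all of its neighbours (namely $g_{i}$ and the other vertices of $H_{i}$) valued $0$ and cannot be dominated. If $s_{i}=2$, then a single vertex $u\in H_{i}$ carries value $2$ and all others in $H_{i}$ are $0$; choosing any other vertex $v\in H_{i}$ (which exists as $|V(H_{i})|\ge 2$) gives a value-$0$ vertex whose only positively valued neighbour is $u$, so it has neither a value-$3$ neighbour nor two value-$2$ neighbours, a contradiction. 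Hence $s_{i}\ge 3$ and $w_{i}\ge 3$. The case $f(g_{i})=2$ is handled by the same local argument: if every vertex of $H_{i}$ were $0$, a vertex of $H_{i}$ would see only the single value-$2$ vertex $g_{i}$ and fail the domination condition, so $s_{i}\ge 2$ and $w_{i}\ge 4$. In every case $w_{i}\ge 3$, which combined with the upper bound gives $\gamma_{dR}(G\odot H)=3n$.
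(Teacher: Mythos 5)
Your proof is correct and takes essentially the same approach as the paper: the identical upper-bound function assigning $3$ to every vertex of $G$ and $0$ elsewhere, and the lower bound via the partition into the $n$ blocks $\{g_{i}\}\cup V(H_{i})$, each of which must carry weight at least $3$. The only difference is one of detail: the paper asserts this last fact in a single sentence, while you verify it rigorously by the case analysis on $f(g_{i})$, correctly using $H\ncong K_{1}$ exactly where it is needed (to rule out $s_{i}=2$ when $f(g_{i})=0$).
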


\begin{proof}
    The function which assigns $3$ to all vertices of $G$ and $0$ to all other vertices is a DRDF of $G\odot H$ so that $\gamma_{dR}(G\odot H)\leqslant 3n$.  Also, there are $n$ mutually exclusive copies of $H$ each of which requires at least  weight $3$ in a DRDF. Hence the result is true.
\end{proof}

\begin{prop}\label{procor}
    For any graph $G$, $2n+1\leqslant\gamma_{dR}(G\odot K_{1})\leqslant3n$, where $n=|V(G)|$.
\end{prop}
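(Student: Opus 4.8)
The plan is to treat $G\odot K_{1}$ concretely: it is the graph obtained from $G$ by attaching a single pendant vertex to each vertex of $G$. Writing $V(G)=\{v_{1},\dots,v_{n}\}$ and letting $w_{i}$ denote the pendant attached to $v_{i}$ (so that $N(w_{i})=\{v_{i}\}$), the vertex set of $G\odot K_{1}$ splits into the $n$ disjoint pairs $\{v_{i},w_{i}\}$, and I would organize the entire argument around these pairs.

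For the upper bound I would exhibit an explicit DRDF: set $f(v_{i})=3$ and $f(w_{i})=0$ for every $i$. Each pendant $w_{i}$ then has its unique neighbour $v_{i}$ in $V_{3}$, so condition (i) holds, while every vertex of $G$ carries the value $3$. The resulting weight is $3n$, giving $\gamma_{dR}(G\odot K_{1})\leqslant 3n$.

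For the lower bound I would first invoke Proposition \ref{1} to assume $V_{1}=\phi$, so that every value lies in $\{0,2,3\}$. The core observation is a per-pair inequality: for each $i$, $f(v_{i})+f(w_{i})\geqslant 2$. Indeed, if $f(w_{i})=0$ then $w_{i}$, having only the neighbour $v_{i}$, must be double Roman dominated by $f(v_{i})=3$; and if $f(w_{i})\geqslant 2$ the inequality is immediate. Summing over the $n$ disjoint pairs yields $f(V)=\sum_{i=1}^{n}\bigl(f(v_{i})+f(w_{i})\bigr)\geqslant 2n$. (Equivalently, one may observe that $\gamma(G\odot K_{1})=n$ and apply Proposition \ref{2}.)

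The main obstacle is upgrading this to the strict bound $2n+1$, that is, ruling out weight exactly $2n$. Here I would exploit the rigidity forced by equality: if $f(V)=2n$, then since each of the $n$ pairs contributes at least $2$, every pair must contribute exactly $2$. By the case analysis above, $f(v_{i})+f(w_{i})=2$ forces $f(w_{i})=2$ and $f(v_{i})=0$ for every $i$ (the alternative $f(w_{i})=0$ would require $f(v_{i})=3$, giving sum $3$). But then each $v_{i}\in V_{0}$, while all of its neighbours in $G$, namely the other $v_{j}$, also have value $0$; its only positively weighted neighbour is $w_{i}$ with $f(w_{i})=2$. Thus $v_{i}$ has no neighbour in $V_{3}$ and exactly one neighbour in $V_{2}$, violating condition (i). This contradiction shows that weight $2n$ is unattainable, whence $\gamma_{dR}(G\odot K_{1})\geqslant 2n+1$, completing the proof.
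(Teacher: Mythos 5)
Your proof is correct and follows essentially the same route as the paper: an explicit weight-$3n$ DRDF for the upper bound (the paper places the $3$'s on the leaves rather than on $V(G)$, an immaterial difference), and for the lower bound the observation that each pendant forces its pair $\{v_{i},w_{i}\}$ to carry weight at least $2$, with total weight exactly $2n$ ruled out because a leaf valued $2$ cannot by itself double Roman dominate its support vertex. Your equality analysis merely spells out in detail what the paper states more tersely.
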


\begin{proof}
    Let $V(G)=\{u_{1}, u_{2},\ldots,u_{n}\}$ and let $u_{i}'$ be the leaf neighbor of $u_{i}$ in $G\odot K_{1}$. We get a DRDF of $G\odot K_{1}$ by simply assigning the value $3$ to each $u_{i}'$ so that $\gamma_{dR}(G\odot K_{1})\leqslant3n$.\par
    To prove the left inequality, let $f$ be any DRDF of $G\odot K_{1}$. Being a pendant vertex, each $u_{i}'$ must be either in $V_{2}^{f}\cup V_{3}^{f}$ or adjacent to a vertex in $V_{3}^{f}$. Also, if $u_{i}'\in V_{2}^{f}$, for all $i=1,2,\ldots,n$, none of the vertices $u_{i}$ can be double Roman dominated by $u_{i}'$ alone. Therefore, $f(V)\geqslant 2n+1$ and hence $\gamma_{dR}(G\odot K_{1})\geqslant 2n+1$.
\end{proof}

\begin{prop}
    Any positive integer $a$ is realizable as the double Roman domination number of $G\odot K_{1}$ for some graph $G$ if and only if $2n+1 \leqslant a \leqslant 3n$, where $n=|V(G)|$.
\end{prop}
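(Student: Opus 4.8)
The forward implication is immediate: if $a=\gamma_{dR}(G\odot K_{1})$ for a graph $G$ on $n$ vertices, then Proposition \ref{procor} already gives $2n+1\leqslant a\leqslant 3n$. So the whole content lies in the converse — given $n$ and any integer $a$ with $2n+1\leqslant a\leqslant 3n$, I must exhibit a graph $G$ on exactly $n$ vertices with $\gamma_{dR}(G\odot K_{1})=a$. The plan is to produce a one-parameter family of graphs on $n$ vertices whose double Roman domination number, after coronating with $K_{1}$, decreases in steps of exactly one as the parameter varies, thereby sweeping out the entire interval $[2n+1,3n]$.

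The family I would use is $G_{m}=K_{m}\cup \overline{K}_{\,n-m}$, a complete graph on $m$ vertices together with $n-m$ isolated vertices, for $1\leqslant m\leqslant n$. Two short computations drive everything. First, $\gamma_{dR}(K_{m}\odot K_{1})=2m+1$ for every $m\geqslant 1$: the lower bound is Proposition \ref{procor}, while for the matching upper bound I assign $3$ to one vertex of $K_{m}$ and $2$ to the leaf attached to each of the remaining $m-1$ vertices, all other vertices receiving $0$; a direct check confirms this is a DRDF of weight $2m+1$. Second, each isolated vertex of $G_m$ together with its pendant forms a copy of $K_{2}=K_{1}\odot K_{1}$ in $G_m\odot K_{1}$, and $\gamma_{dR}(K_{2})=3$ (again by Proposition \ref{procor} with a single vertex), so the $n-m$ isolated vertices contribute exactly $3(n-m)$.

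To combine these I would invoke two routine facts: $\gamma_{dR}$ is additive over connected components, and the corona distributes over disjoint unions, i.e. $(G_{1}\cup G_{2})\odot K_{1}=(G_{1}\odot K_{1})\cup(G_{2}\odot K_{1})$. Both hold because the defining conditions of a DRDF are local to components and the added pendants never bridge components. Hence $\gamma_{dR}(G_{m}\odot K_{1})=(2m+1)+3(n-m)=3n-m+1$. As $m$ runs through $1,2,\dots,n$ this takes every integer value from $3n$ (at $m=1$, where $G_{1}=\overline{K}_{\,n}$) down to $2n+1$ (at $m=n$, where $G_{n}=K_{n}$). Thus for a prescribed $a\in[2n+1,3n]$ the choice $m=3n+1-a$ lies in $\{1,\dots,n\}$ and yields $\gamma_{dR}(G_{m}\odot K_{1})=a$, completing the construction.

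I do not anticipate a genuine obstacle: the essential idea is simply the selection of an interpolating family with unit step size, after which the verifications are local and brief. The only points needing a little care are confirming that the exhibited DRDF on $K_{m}\odot K_{1}$ satisfies the double Roman condition at every $0$-valued vertex — each leaf assigned $2$ is self-justifying and each $0$-valued vertex of $K_{m}$ sees the unique $3$ — and that the endpoints $m=1$ and $m=n$ correctly reproduce the extremal graphs $\overline{K}_{\,n}$ and $K_{n}$ attaining the two ends of the range.
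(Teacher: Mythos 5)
Your proof is correct, and at heart it uses the same template as the paper's: a one-parameter family of graphs on $n$ vertices of the form (small connected graph with a universal vertex) $\cup$ (isolated vertices), whose corona with $K_{1}$ realizes weights stepping down by one across the whole interval. The paper takes $G=K_{1,m}\cup (n-m-1)K_{1}$ for $m=0,1,\ldots,n-1$ and exhibits a single function of weight $3n-m$; you take $G_{m}=K_{m}\cup \overline{K}_{n-m}$ for $m=1,\ldots,n$ and get $3n-m+1$ — essentially the same family with a clique in place of a star. Where your route genuinely differs, and improves, is on minimality: the paper simply asserts ``clearly, $f$ is a $\gamma_{dR}$-function,'' whereas you derive the matching lower bound honestly, via additivity of $\gamma_{dR}$ over components together with Proposition \ref{procor} applied per component (each isolated vertex's component is $K_{1}\odot K_{1}=K_{2}$, pinned at exactly $3$, and $K_{m}\odot K_{1}$ is pinned at exactly $2m+1$ by the same proposition plus your explicit DRDF). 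This costs you two routine lemmas — component additivity and the fact that corona distributes over disjoint unions — which you correctly identify as local verifications. Incidentally, your fully checked construction sidesteps a slip in the paper's displayed function: as written there, the value $2$ is placed on the star leaves $u_{i}$ ($i=2,\ldots,m+1$) rather than on their pendants $u_{i}'$, so each $u_{i}'$ is a $0$-vertex whose only neighbor carries a $2$, violating the DRDF condition; the primes must be swapped (assign $3$ to $u_{1}$ and $2$ to the $u_{i}'$) to make the paper's function valid, with the same weight $3n-m$. In your family every $0$-vertex sees the vertex of the clique assigned $3$, so no such issue arises.
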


\begin{proof}
    Let $G$ be a graph with $|V(G)|=n$. If $\gamma_{dR}(G\odot K_{1})=a$, then by Proposition \ref{procor}, $2n+1\leqslant a\leqslant 3n$.\par

    To prove the converse part, take $G$ as $K_{1,m}\cup (n-m-1)K_{1}$. For definiteness, let $u_{1},u_{2},\ldots,u_{m+1}$ be the vertices of $K_{1,m}$ in which $u_{1}$ is the universal vertex and $u_{m+2},\ldots,u_{n}$ be the isolated vertices in $G$. Let $u_{i}'$ be the leaf neighbor of $u_{i}$ in $G\odot K_{1}$. Define $f$ on $V(G\odot K_{1})$ as follows:
    \[f(v)=
    \begin{cases}
    3, & if\ \ v=u_{i}'\ for \ i=1,m+2,m+3,\ldots,n,\\
    2,&  if\ \ v=u_{i}\ for\ i=2,3,\ldots,m+1,\\
    0,& otherwise.
    \end{cases}
    \]
    Clearly, $f$ is a $\gamma_{dR}$-function with weight $3(n-m)+2m=3n-m$. As $m$ varies from $0$ to $n-1$, we get $G$ with $\gamma_{dR}(G\odot K_{1})$ varies from $3n$ to $2n+1.$ (Note that $K_{1,0}$ is considered as $K_{1}$.) Hence, the result is true.
\end{proof}

\begin{prop}\label{prop Pn}

    \[\gamma_{dR}(P_{n}\odot K_{1})=
    \begin{cases}
    \frac{7n}{3}, & if\ \ n=3k,\\
    \frac{7n+2}{3}, & if\ \ n=3k+1,\\
    \frac{7n+1}{3}, & if\ \ n=3k+2.\\
    \end{cases}
    \]
\end{prop}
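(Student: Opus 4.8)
The plan is to establish matching lower and upper bounds, with one preliminary observation that streamlines everything: the three listed values are precisely $\lceil 7n/3\rceil$. Indeed, $7n/3$ is already an integer when $n=3k$, while $\lceil 7n/3\rceil=(7n+2)/3$ when $n\equiv 1\ (\mathrm{mod}\ 3)$ and $\lceil 7n/3\rceil=(7n+1)/3$ when $n\equiv 2\ (\mathrm{mod}\ 3)$. Since the weight of any DRDF is an integer, it therefore suffices to exhibit a DRDF of weight $\lceil 7n/3\rceil$ and, separately, to prove that every DRDF has weight at least $7n/3$; integrality then upgrades the latter to $\lceil 7n/3\rceil$, matching the former.

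For the upper bound I would write $u_1,\dots,u_n$ for the path vertices and $u_i'$ for the leaf attached to $u_i$, and use a pattern of period three. On each full block $\{u_{3j+1},u_{3j+2},u_{3j+3}\}$ set $f(u_{3j+2})=3$ and $f(u_{3j+1}')=f(u_{3j+3}')=2$, leaving the other four vertices of the block at $0$. This spends weight $7$ per block and double Roman dominates it: the central vertex of value $3$ covers $u_{3j+1}$, $u_{3j+3}$ and its own leaf, while the two flanking leaves are themselves in $V_2$. The leftover vertices are handled at the tail of the path: if $n=3k$ nothing remains; if $n=3k+1$ put $f(u_n)=3$ (cost $3$); if $n=3k+2$ put $f(u_{n-1})=3$ and $f(u_n')=2$ (cost $5$). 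Checking the junctions between consecutive blocks and the terminal vertices is routine and gives total weight $7k$, $7k+3$, $7k+5$ respectively, i.e.\ exactly $\lceil 7n/3\rceil$.

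For the lower bound, let $f=(V_0,V_2,V_3)$ be an arbitrary DRDF with $V_1=\emptyset$ (legitimate by Proposition \ref{1}), and for each $i$ set $c_i=f(u_i)+f(u_i')$, so that $f(V)=\sum_{i=1}^{n}c_i$. Because $u_i'$ is a leaf whose only neighbour is $u_i$, the defining conditions force: if $f(u_i')=0$ then $f(u_i)=3$, giving $c_i=3$; if $f(u_i')=3$ then $c_i\ge 3$; and if $f(u_i')=2$ then $c_i\ge 2$ with $c_i=2$ exactly when $f(u_i)=0$. Call the index $i$ \emph{light} when $c_i=2$, that is when $f(u_i')=2$ and $f(u_i)=0$; every non-light index satisfies $c_i\ge 3$. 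Let $\ell$ be the number of light indices. The key point is that light indices are sparse. If $i$ is light, then $u_i\in V_0$ has the single $V_2$-neighbour $u_i'$ and no $V_3$-neighbour there, so the condition at $u_i$ forces one of its path-neighbours $u_{i-1},u_{i+1}$ to lie in $V_2\cup V_3$. Set $S=\{i:f(u_i)\ge 2\}$; then $S$ is disjoint from the light indices, so $|S|\le n-\ell$. Counting incidences along the path-edges between light indices and their supporting neighbours in $S$, each light index contributes at least one incidence while each vertex of $S$ lies on at most two path-edges and so contributes at most two; hence $\ell\le 2|S|\le 2(n-\ell)$, giving $\ell\le 2n/3$. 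Consequently
\[ f(V)=\sum_{i=1}^{n}c_i\ge 2\ell+3(n-\ell)=3n-\ell\ge 3n-\tfrac{2n}{3}=\tfrac{7n}{3}, \]
and integrality of $f(V)$ upgrades this to $f(V)\ge\lceil 7n/3\rceil$.

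The main obstacle is this sparsity estimate for the light indices: one must argue carefully that a light index genuinely needs a path-neighbour of value at least $2$ (using that its leaf supplies only one $V_2$-neighbour and no $V_3$-neighbour), and then convert this local requirement into the global bound $\ell\le 2(n-\ell)$ through the edge-incidence count. The upper bound is comparatively mechanical, the only care being the treatment of the block junctions and the one or two terminal vertices so that the three residue classes yield exactly $\lceil 7n/3\rceil$; the small cases $n=1,2$ (illustrated in Figure \ref{fig:1}) fall out of the same construction.
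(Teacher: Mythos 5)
Your proof is correct, and its lower-bound half takes a genuinely different (and more complete) route than the paper. For the upper bound the two arguments produce essentially the same assignment---value $3$ on every third path vertex, value $2$ on the leaves hanging from the remaining path vertices, and a weight-$3$ or weight-$5$ patch at the tail: the paper arrives at it by transferring a minimum-weight DRDF $f$ of $P_{n}$ with $V_{2}=\phi$ to the corona (placing $3$ on the vertices of $V_{3}^{f}$ and $2$ on the leaves attached to the vertices of $V_{0}^{f}$), while you build the period-three pattern directly; these are interchangeable, and your junction and endpoint checks go through, including $n=1,2$. The real divergence is optimality: the paper simply asserts that the transferred function $g$ is ``clearly'' a $\gamma_{dR}$-function and supplies no lower-bound argument at all, whereas you actually prove the matching bound via the per-pair accounting $c_{i}=f(u_{i})+f(u_{i}')$, the observation that $c_{i}=2$ forces the light configuration $f(u_{i}')=2$, $f(u_{i})=0$ (the leaf condition rules out $c_{i}\in\{0,2\}$ otherwise, since $f(u_{i}')=0$ forces $f(u_{i})=3$), and the incidence count $\ell\leqslant 2|S|\leqslant 2(n-\ell)$, valid also for light indices at the path ends, giving $\ell\leqslant 2n/3$, $f(V)\geqslant 3n-\ell\geqslant 7n/3$, and then $\lceil 7n/3\rceil$ by integrality---which matches your construction in all three residue classes. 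Your counting argument buys a self-contained, fully checkable proof where the paper leaves the lower bound to the reader; the paper's route, in exchange, is shorter and leans on the known structure of minimum DRDFs of paths.
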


\begin{proof}
    Let $P_{n}: u_{1}u_{2}\ldots u_{n}$ be a path and let $u_{i}'$ be the vertex adjacent to $u_{i}$ in $P_{n}\odot K_{1}$. In a $\gamma_{dR}$-function, a pendant vertex must be either in $V_{2}$ or adjacent to a vertex in $V_{3}$. If $n=3k$ or $3k+2$, we have a  $\gamma_{dR}$-function of $P_{n}$ with $V_{2}=\phi$. If $n=3k+1$, let $f$ be a DRDF with minimal weight such that $V_{2}=\phi$. Define $g$ on $V(P_{n}\odot K_{1})$ as follows:
    \[g(v)=
    \begin{cases}
    3, & if\ \ v=u_{i}\in V_{3}^{f},\\
    2, & if\ \ v=u_{i}'\  with\  u_{i}\in V_{0}^{f},\\
    0 & otherwise.\\
    \end{cases}
    \]
    Clearly, $g$ is a $\gamma_{dR}$-function. Hence if $n=3k$, $\gamma_{dR}(P_{n}\odot K_{1})=3.\frac{n}{3}+2.\frac{2n}{3}=\frac{7n }{3}$. If $n=3k+1$, $\gamma_{dR}(P_{n}\odot K_{1})=3(\frac{n-1}{3}+1)+2\frac{2(n-1)}{3}=\frac{7n+2}{3}$. If $n=3k+2$, $\gamma_{dR}(P_{n}\odot K_{1})=3(\frac{n-2}{3}+1)+2(\frac{2(n-2)}{3}+1)=\frac{7n+1}{3}$.
\end{proof}

\begin{prop}

    \[\gamma_{dR}(C_{n}\odot K_{1})=
    \begin{cases}
    \frac{7n}{3}, & if\ \ n=3k,\\
    \frac{7n+2}{3}, & if\ \ n=3k+1,\\
    \frac{7n+1}{3}, & if\ \ n=3k+2.\\
    \end{cases}
    \]
\end{prop}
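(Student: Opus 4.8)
The plan is to prove matching upper and lower bounds, establishing $\gamma_{dR}(C_n\odot K_1)=2n+\lceil n/3\rceil$, which one checks equals $\tfrac{7n}{3},\tfrac{7n+2}{3},\tfrac{7n+1}{3}$ according as $n\equiv 0,1,2\ (mod\ 3)$ (equivalently $2n+\lceil n/3\rceil=3n-\lfloor 2n/3\rfloor$, since $\lfloor 2n/3\rfloor+\lceil n/3\rceil=n$). Throughout, write $C_n:u_1u_2\cdots u_nu_1$ and let $u_i'$ be the pendant attached to $u_i$ in $C_n\odot K_1$.

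For the upper bound I would imitate the construction of Proposition \ref{prop Pn}, but drive it directly by a minimum dominating set of the cycle rather than by a $\gamma_{dR}$-function. Since $\gamma(C_n)=\lceil n/3\rceil$, fix a dominating set $D$ of $C_n$ with $|D|=\lceil n/3\rceil$ and define $g$ by $g(u_i)=3$ for $u_i\in D$, $g(u_i')=2$ when $u_i\notin D$, and $g=0$ elsewhere. Every cycle vertex outside $D$ has a cycle neighbour in $D$ (value $3$), every pendant of a $D$-vertex is dominated by that vertex, and every pendant of a non-$D$ vertex carries weight $2$; hence $g$ is a DRDF of weight $3|D|+2(n-|D|)=|D|+2n=\lceil n/3\rceil+2n$, giving the required upper bound.

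For the lower bound let $f$ be a $\gamma_{dR}$-function with $V_1=\emptyset$ (Proposition \ref{1}) and examine the $n$ pairs $(f(u_i),f(u_i'))$. Since $u_i'$ is a leaf whose only neighbour is $u_i$, if $f(u_i)\neq 3$ then $u_i'$ cannot be dominated by a $3$ and so $f(u_i')\geq 2$; listing the admissible pairs then shows $f(u_i)+f(u_i')\geq 2$, with equality exactly for the pair $(0,2)$. Let $Z$ be the set of indices attaining equality and $z=|Z|$; summing over $i$ gives $f(V)\geq 2z+3(n-z)=3n-z$. The crux is to bound $z$. For $i\in Z$ we have $f(u_i)=0$ with its pendant supplying only a single $V_2$-neighbour, so double Roman domination of $u_i$ forces a cycle neighbour $u_{i-1}$ or $u_{i+1}$ to lie in $V_2\cup V_3$. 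Setting $B=\{j:f(u_j)\geq 2\}$ and $b=|B|$, the sets $Z$ and $B$ are disjoint, each index of $Z$ has a cycle neighbour in $B$, and each vertex of $B$ has only two cycle neighbours; double-counting the cycle edges between $Z$ and $B$ yields $z\leq 2b\leq 2(n-z)$, that is $z\leq\lfloor 2n/3\rfloor$. Substituting gives $f(V)\geq 3n-\lfloor 2n/3\rfloor=2n+\lceil n/3\rceil$, matching the upper bound.

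The routine parts are the verification that $g$ is a DRDF and the arithmetic reconciling the three residue cases with $2n+\lceil n/3\rceil$. The genuine obstacle is the counting step $z\leq\lfloor 2n/3\rfloor$: one must argue that the cheap pairs $(0,2)$ cannot be too numerous, using that each such pair forces an adjacent heavy cycle vertex of weight at least $2$ and that a single heavy vertex can subsidise at most its two cycle neighbours. I expect this to be the only place where the cyclic structure (as opposed to the path of Proposition \ref{prop Pn}) really enters, and it is precisely where the sharp constant $\lfloor 2n/3\rfloor$ is produced.
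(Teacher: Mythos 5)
Your proof is correct, and it is actually more complete than the paper's, whose entire argument for the cycle case is the sentence ``The proof is similar to that of $P_{n}$.'' On the upper bound you and the paper do essentially the same thing: the paper lifts a minimum-weight DRDF of $C_{n}$ with $V_{2}=\emptyset$ (whose $V_{3}$ is exactly a minimum dominating set of the cycle, of size $\lceil n/3\rceil$) by placing $2$ on the pendants of the $V_{0}$-vertices, which is precisely your function $g$ driven by the dominating set $D$; both give weight $2n+\lceil n/3\rceil$. The genuine difference is the lower bound: the paper merely asserts optimality (``Clearly, $g$ is a $\gamma_{dR}$-function'') with no supporting argument, whereas you prove it. Your counting is sound: with $V_{1}=\emptyset$ (Proposition \ref{1}) the admissible pairs $(f(u_{i}),f(u_{i}'))$ are exactly $(3,0),(3,2),(3,3),(2,2),(2,3),(0,2),(0,3)$, so only $(0,2)$ has sum $2$; each index in $Z$ forces a cycle neighbour in $B=\{j:f(u_{j})\geq 2\}$, the witnessing $Z$--$B$ edges are pairwise distinct because each such edge has exactly one endpoint in $Z$, and since every vertex of $B$ lies on only two cycle edges and $B\cap Z=\emptyset$, you get $z\leq 2b\leq 2(n-z)$, hence $z\leq\lfloor 2n/3\rfloor$ and $f(V)\geq 3n-z\geq 3n-\lfloor 2n/3\rfloor=2n+\lceil n/3\rceil$, matching the construction. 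One small remark: your closing comment that the cyclic structure is where the count really enters is not quite right --- the argument only uses that each base vertex has at most two neighbours on the base graph, so the identical proof also establishes Proposition \ref{prop Pn} for paths; in that sense your route buys a rigorous and uniform treatment of both propositions, filling a gap the paper leaves to the reader.
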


\begin{proof}
    The proof is similar to that of $P_{n}$.
\end{proof}

\begin{prop}

    $\gamma_{dR}(K_{n}\odot K_{1})=2n+1$.
\end{prop}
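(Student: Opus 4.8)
The plan is to prove $\gamma_{dR}(K_n \odot K_1) = 2n+1$ by establishing both inequalities, with the lower bound already essentially handed to us by Proposition \ref{procor}. Indeed, since $|V(K_n)| = n$, Proposition \ref{procor} immediately gives $\gamma_{dR}(K_n \odot K_1) \geqslant 2n+1$, so the entire content of the proof lies in exhibiting a DRDF of weight exactly $2n+1$.

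To construct such a function, first I would set up notation: let $V(K_n) = \{u_1, u_2, \ldots, u_n\}$ and let $u_i'$ denote the pendant vertex attached to $u_i$ in $K_n \odot K_1$. The key structural observation is that $K_n$ is complete, so a single vertex assigned the value $3$ dominates every other $u_j$ in the $K_n$-part at full double-Roman strength (any $u_j$ with $f(u_j)=0$ has the neighbor of value $3$). This suggests concentrating weight on one vertex. The natural candidate is to define $f(u_1) = 3$, which takes care of all the interior vertices $u_2, \ldots, u_n$ simultaneously. It then remains to handle the $n$ pendant vertices. Each pendant $u_i'$ has $u_i$ as its only neighbor, so a pendant with $f(u_i')=0$ would need $u_i \in V_3$, which we cannot afford for every $i$. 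The cheapest legal option for a pendant whose support has value less than $3$ is to assign the pendant the value $2$.

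Putting this together, I would define $f(u_1) = 3$, $f(u_i') = 2$ for $i = 2, 3, \ldots, n$, $f(u_1') = 2$, and $f(u_i) = 0$ for $i = 2, \ldots, n$. One then checks the DRDF conditions vertex by vertex: the interior vertices $u_2, \ldots, u_n$ are in $V_0$ and each is adjacent to $u_1 \in V_3$, so they are correctly dominated; each pendant $u_i'$ for $i \geqslant 2$ carries value $2$ and hence imposes no condition; the pendant $u_1'$ also carries $2$; and $u_1 \in V_3$ imposes no condition. The total weight is $3 + 2(n-1) + 2 = 2n + 3$, which overshoots. I would therefore refine the assignment so that $u_1'$ (the pendant of the high-value support $u_1$) gets value $0$, since it already has the value-$3$ neighbor $u_1$. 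That gives $f(u_1)=3$, $f(u_1')=0$, $f(u_i')=2$ for $i \geqslant 2$, and $f(u_i)=0$ for $i \geqslant 2$, with total weight $3 + 0 + 2(n-1) = 2n+1$, matching the lower bound exactly.

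The main obstacle, though minor, is verifying that this refined function is genuinely a DRDF, in particular that every value-$0$ vertex is properly double-Roman dominated: the interior $u_i$ ($i \geqslant 2$) are adjacent to $u_1 \in V_3$ and hence fine, and $u_1'$ is adjacent to $u_1 \in V_3$ and hence fine, while every pendant $u_i'$ with $i \geqslant 2$ sits in $V_2$ and needs no neighbor condition. Once this check is complete, the upper bound $\gamma_{dR}(K_n \odot K_1) \leqslant 2n+1$ follows, and combined with the lower bound from Proposition \ref{procor} the equality is established. I expect the whole argument to be short, since the completeness of $K_n$ collapses the domination of the entire core onto the single vertex $u_1$.
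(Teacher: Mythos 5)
Your proposal is correct and matches the paper's proof: the paper exhibits exactly the same DRDF (value $3$ on one vertex $u_1$ of $K_n$, value $2$ on each pendant $u_i'$ with $i\neq 1$, and $0$ elsewhere) and relies, as you do, on the lower bound $2n+1$ from Proposition \ref{procor}. You merely make explicit the vertex-by-vertex verification and the citation of the lower bound, which the paper leaves implicit.
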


\begin{proof}
    Let $V(K_{n})=\{u_{1}, u_{2},\ldots,u_{n}\}$ and let $u_{i}'$ be the leaf neighbor of $u_{i}$ in $K_{n}\odot K_{1}$.    A $\gamma_{dR}$-function can be obtained for $K_{n}\odot K_{1}$ by assigning $3$ to any one vertex, say $u_{1}$ of $K_{n}$, $2$ to  $u_{i}'$ with $i\neq 1$, and $0$ to all other vertices. Hence   $\gamma_{dR}(K_{n}\odot K_{1})=2n+1$.
\end{proof}

\begin{prop}
    \[\gamma_{dR}(K_{p,q}\odot K_{1})=
    \begin{cases}
    2(p+q)+1, & if\ \ p=1\ or\ q=1,\\
    2(p+q+1),&  otherwise.
    \end{cases}
    \]
\end{prop}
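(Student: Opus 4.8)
The plan is to prove the formula for $\gamma_{dR}(K_{p,q}\odot K_1)$ by splitting into the two cases indicated by the statement, constructing explicit DRDFs to get upper bounds and then arguing matching lower bounds.

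The plan is to treat the two cases of the stated formula separately: in each I would produce an explicit DRDF for the upper bound and a counting argument for the matching lower bound. Write $n=p+q$, let $X=\{x_1,\dots,x_p\}$ and $Y=\{y_1,\dots,y_q\}$ be the parts of $K_{p,q}$, and for each vertex $v$ of $K_{p,q}$ let $v'$ denote its pendant neighbour in $K_{p,q}\odot K_1$. By Proposition \ref{1} I may assume throughout that the DRDFs under consideration avoid the value $1$.

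For the first case ($p=1$ or $q=1$), say $p=1$, the graph $K_{1,q}$ is a star with centre $x_1$. The function assigning $3$ to $x_1$, the value $2$ to each pendant $y_j'$, and $0$ elsewhere is readily checked to be a DRDF of weight $3+2q=2(p+q)+1$, which gives the upper bound; the matching lower bound $2(p+q)+1$ is exactly Proposition \ref{procor}, so this case is immediate. For the second case ($p,q\geq 2$) the upper bound comes from the function that assigns $3$ to one vertex $x_1\in X$ and one vertex $y_1\in Y$, assigns $2$ to every remaining pendant $x_i'$ ($i\geq 2$) and $y_j'$ ($j\geq 2$), and $0$ to everything else. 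I would verify directly that each $0$-vertex of $K_{p,q}$ sees a vertex of value $3$ on the opposite side while $x_1'$ and $y_1'$ see their roots, so this is a DRDF, of weight $6+2(p-1)+2(q-1)=2(p+q+1)$.

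The main work, and the step I expect to be the obstacle, is the lower bound in the second case: I must rule out a DRDF of weight $2n+1$. The key device is to group the weight by pendant-pairs, setting $w(v)=f(v)+f(v')$ for each $v\in V(K_{p,q})$, so that $f(V)=\sum_v w(v)$. Since each $v'$ is a pendant and the value $1$ is barred, a short check on $f(v')\in\{0,2,3\}$ shows $w(v)\geq 2$ always, with equality forcing $(f(v),f(v'))=(0,2)$. Hence a DRDF of weight $2n+1$ would force exactly one pair $v_0$ to satisfy $w(v_0)=3$ and every other root-vertex to carry value $0$ with its pendant carrying $2$. Analysing where the weight $3$ sits on the exceptional pair — where $(f(v_0),f(v_0'))$ must be $(3,0)$ or $(0,3)$ — I would then derive a contradiction: in either configuration some root-vertex on one side of $K_{p,q}$ has all its $K_{p,q}$-neighbours equal to $0$ and only its own pendant (value $2$) as a positive neighbour, so it has a single $V_2$-neighbour and no $V_3$-neighbour and thus fails the double Roman condition. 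This is precisely where the hypothesis $p,q\geq 2$ enters, since it guarantees that such an undominated vertex exists on the relevant side. Combining the upper and lower bounds then yields the stated value.
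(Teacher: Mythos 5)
Your proposal is correct and follows essentially the same route as the paper: the identical explicit constructions for both upper bounds, Proposition \ref{procor} for the lower bound in the star case, and, for $p,q\geq 2$, a weight count over pendant--support pairs showing a hypothetical DRDF of weight $2(p+q)+1$ leaves exactly one exceptional pair of type $(3,0)$ or $(0,3)$, each of which is then refuted by exhibiting a root vertex whose only positive neighbour is its own pendant of value $2$ --- the same two-configuration analysis the paper carries out. Your pair-weight function $w(v)=f(v)+f(v')$ is merely a tidier bookkeeping of the paper's count of pendants in $V_{2}$; as a small bonus it automatically disposes of the paper's separate subcase in which all $p+q$ pendants receive value $2$, since the root values would then have to sum to $1$, which is impossible over $\{0,2,3\}$.
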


\begin{proof}
    Let $V(K_{p,q})=\{u_{1},u_{2},\ldots,u_{p},v_{1},v_{2},\ldots,v_{q}\}$ and let $u_{i}'$ be the leaf neighbor of $u_{i}$, for $i=1,2,\ldots,p$ and $v_{j}'$ be the vertex adjacent to $v_{j}$, for $j=1,2,\ldots,q$ in $K_{p,q}\odot K_{1}$. By the left inequality of Proposition \ref{procor}, $\gamma_{dR}(K_{p,q}\odot K_{1})\geqslant 2(p+q)+1$.\\
    \textbf{    \textit{Case 1 :}} $p=1$ or $q=1$.\\
    For definiteness, let $p=1$. Then the function $f$ defined by
    \[f(u)=
    \begin{cases}
    3, & for \ u=u_{1},\\
    2, & for\  all\  u=u_{i}', i=2,3,\ldots,p\ and \  u=v_{j}', j=1,2,\ldots,q,\\
    0, & otherwise,
    \end{cases}
    \]
    is a DRDF of $K_{p,q}\odot K_{1}$ with weight $2(p+q)+1$. Therefore, $\gamma_{dR}(K_{p,q}\odot K_{1})=2(p+q)+1$.\\
    \textbf{\textit{Case 2 :}} $p,q \geqslant 2$.   \\
    Define $f$ as follows:
    \[f(u)=
    \begin{cases}
    3, & for \ u=u_{1}\ and \ u=v_{1},\\
    2, & for\  all\  u=u_{i}',\  i=2,3,\ldots,p\ and \  u=v_{j}',\  j=2,3,\ldots,q,\\
    0, & otherwise.
    \end{cases}
    \]
    $f$ is a DRDF of $K_{p,q}\odot K_{1}$ with weight $6+2(p+q-2)=2(p+q+1)$ and hence $\gamma_{dR}(K_{p,q}\odot K_{1})\leqslant 2(p+q+1)$. For the reverse inequality, if possible suppose that there exists a DRDF $g$ of $K_{p,q}\odot K_{1}$ with weight $2(p+q)+1$. Out of $p+q$ pendant vertices in $K_{p,q}\odot K_{1}$, let $k$ vertices be in $V_{2}^{g}$. Then the remaining $p+q-k$ pendant vertices  are either in $V_{3}^{g}$ or adjacent to vertices in $V_{3}^{g}$. Hence the weight of $g$, $g(V)=2(p+q)+1\geqslant 2k+3(p+q-k)$ which implies $k\geqslant p+q-1$. If $k> p+q-1$, then $k=p+q$ so that all the pendant vertices are in $V_{2}^{g}$ and none of them can double Roman dominate any of the non pendant vertices. Therefore, we need more vertices having non zero values under $g$ which contradicts the fact that $g(V)=2(p+q+1)$. If $k=p+q-1$,
    then one pendant vertex, say $x$, is either in $V_{3}^{g}$ or adjacent to a vertex in $V_{3}^{g}$. If $x$ is in $V_{3}^{g}$, then $x$ can double Roman dominate only its support vertex. If $x$ is adjacent to a vertex in $V_{3}^{g}$, then its support vertex, say $y$, is in $V_{3}^{g}$ and $y$ cannot double Roman dominate any of the remaining vertices in the partite set of $K_{p,q}$ containing $y$. In either case, we need more vertices having non zero values under $g$, which again leads to a contradiction as above. Hence the result is true.
    \end{proof}

\begin{prop}
    For any graph $G$, $\gamma_{dR}((G\odot K_{1})\odot K_{1})=5n$, where $n=|V(G)|$.
\end{prop}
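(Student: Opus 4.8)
The plan is to exploit the rigid local structure of $(G\odot K_1)\odot K_1$ and reduce the whole problem to a single four-vertex gadget that recurs $n$ times. Writing $V(G)=\{u_1,\dots,u_n\}$ and letting $u_i'$ be the pendant attached to $u_i$ in $G\odot K_1$, the second corona attaches a new leaf $x_i$ to each $u_i$ and a new leaf $y_i$ to each $u_i'$. Thus every original vertex $u_i$ spawns the four vertices $B_i=\{x_i,u_i,u_i',y_i\}$, which induce the path $x_i-u_i-u_i'-y_i$; crucially, $x_i$ and $y_i$ are leaves and $u_i'$ has degree two, so the only vertex of $B_i$ with neighbours outside $B_i$ is $u_i$. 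The sets $B_1,\dots,B_n$ partition the $4n$-vertex set of $(G\odot K_1)\odot K_1$, and the idea is to show $f(B_i)\ge 5$ for every DRDF $f$ and to match this with an explicit function of weight $5n$.

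For the upper bound I would exhibit the function $f$ given by $f(u_i)=3$, $f(y_i)=2$, and $f(x_i)=f(u_i')=0$ for all $i$. A direct check shows this is a DRDF: each leaf $x_i$ and each degree-two vertex $u_i'$ sees the neighbour $u_i$ with $f(u_i)=3$, while every $y_i$ carries value $2$; the edges of $G$ among the $u_i$'s play no role, since no $u_i$ is assigned $0$. Its weight is $\sum_i\bigl(f(u_i)+f(y_i)\bigr)=5n$, giving $\gamma_{dR}((G\odot K_1)\odot K_1)\le 5n$.

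For the lower bound I would invoke Proposition \ref{1} to fix a $\gamma_{dR}$-function $f$ with $V_1=\emptyset$, so that $f$ takes values in $\{0,2,3\}$, and then argue gadget by gadget. Inside $B_i$ the three vertices $x_i$, $y_i$, $u_i'$ have all of their neighbours in $B_i$, so each of them must be double Roman dominated using values assigned within $B_i$ alone; only $u_i$ may be dominated from the $G$-part, which is exactly why the bound will come out independent of $G$. A short case analysis on $\bigl(f(u_i),f(u_i'),f(x_i),f(y_i)\bigr)$ then shows that no assignment of total weight at most $4$ can dominate all three of $x_i,u_i',y_i$, so $f(B_i)\ge 5$; summing over $i$ yields $f(V)\ge 5n$, and combining the two bounds gives equality.

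The main obstacle is the weight-$4$ case of this analysis, and it is precisely the place where the corona-of-a-corona structure bites. The leaf conditions force $f(x_i)\ge 2$ unless $f(u_i)=3$, and $f(y_i)\ge 2$ unless $f(u_i')=3$; the only way to spend just $4$ and still satisfy both is $f(x_i)=f(y_i)=2$ with $f(u_i)=f(u_i')=0$. But then the degree-two vertex $u_i'$, sitting in $V_0$, has neighbours $u_i$ (value $0$) and $y_i$ (value $2$): it has no neighbour in $V_3$ and only one neighbour in $V_2$, so it is not double Roman dominated. Ruling out this and the (easier) weight-$\le 3$ configurations is the crux; once it is done, the per-gadget bound of $5$ together with the partition immediately delivers the theorem.
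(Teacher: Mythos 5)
Your proposal is correct and follows essentially the same route as the paper: both decompose $(G\odot K_{1})\odot K_{1}$ into the $n$ vertex-disjoint induced paths $P_{4}$ spawned by each $u_i$, show each path must receive weight at least $5$ under any DRDF (using that only $u_i$ has neighbours outside its gadget), and match this with the same weight-$5n$ function assigning $3$ to $u_i$ and $2$ to the outer leaf. Your explicit case analysis of the weight-$4$ configurations is in fact a bit more detailed than the paper's terse justification of the per-gadget bound, but it is the same argument.
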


\begin{proof}
    Let $G$ be a graph with vertex set $V(G)=\{u_{1},u_{2},\ldots,u_{n}\} $ and let $v_{i}$ be the leaf neighbor of  $u_{i}$ in $G\odot K_{1} $. Let $u_{i}'$ and $v_{i}'$ be the leaf neighbors of  $u_{i}$ and  $v_{i}$ respectively in $(G\odot K_{1})\odot K_{1}$. Then $(G\odot K_{1})\odot K_{1}$ contains $n$ vertex disjoint $P_{4}$'s, $u_{i}'u_{i}v_{i}v_{i}'$, for $i= 1,2,\ldots,n$. Let $f$ be any $DRDF$ on $(G\odot K_{1})\odot K_{1}$. Then the two pendant vertices, $u_{i}'$ and $v_{i}'$, in each $P_{4}$ should be either in $V_{2}^{f}\cup V_{3}^{f}$ or adjacent to a vertex in $V_{3}^{f}$. If all the pendant vertices are in $V_{2}^{f}$, then to double Roman dominate non pendant vertices, $u_{i}$ and $v_{i}$, we need more vertices with non zero values in each $P_{4}$. Also note that pendant vertices have no common neighbors. Hence, under $f$, the sum of the values of vertices in each of the above mentioned $P_{4}$'s must be at least $5$. Therefore, $f(V)\geqslant 5n$.\par

    To prove the reverse inequality, define $g$ as follows:
    \[g(u)=
    \begin{cases}
    3, & for \ u=u_{i},\\
    2, & for\  \  u=v_{i}',\\
    0, & otherwise.
    \end{cases}
    \]
    Clearly $g$ is a DRDF on $(G\odot K_{1})\odot K_{1}$ with $g(V)=5n$. Hence, the result is true.
\end{proof}

\noindent \textbf{Note: }
\begin{enumerate}
    \item   Corona of any graph $G$ with $K_{1}$ can be thought of as rooted product of $G$ by $H$ where $H$ is a sequence of $n$ rooted $P_{2}$'s.
    \item Corona of $P_{n}$ with $K_{1}$ is called comb. \end{enumerate}

\section{ Addition of Twins}

In this section, we study the impact of addition of twins on double Roman domination number.

\begin{thm}
    Let $G$ be a graph and $u\in V(G)$. Let $H$ be the graph obtained from $G$ by attaching a true twin $u'$ to $u$. Then $\gamma_{dR}(G)\leqslant \gamma_{dR}(H)\leqslant \gamma_{dR}(G)+1$.
\end{thm}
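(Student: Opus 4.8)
The plan is to prove the two inequalities separately, in each case starting from an optimal function and editing it across the single vertex $u'$. Throughout I will use the defining property of a true twin: $u'$ is adjacent to every vertex of $N_G[u]$, so $u$ and $u'$ are adjacent and $N_H(u')=N_G(u)\cup\{u\}$; equivalently $N_H[u]=N_H[u']=N_G[u]\cup\{u'\}$, so $u$ and $u'$ have exactly the same neighbours in $G$ together with each other. By Proposition \ref{1} I may assume throughout that every optimal function takes values only in $\{0,2,3\}$.

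For the lower bound $\gamma_{dR}(G)\le\gamma_{dR}(H)$, I would start from a $\gamma_{dR}$-function $f$ of $H$ and \emph{merge} $u'$ into $u$: define $g$ on $V(G)$ by $g(v)=f(v)$ for $v\ne u$ and $g(u)=\max\{f(u),f(u')\}$. The weight then drops by $\min\{f(u),f(u')\}$, so $g(V(G))\le f(V(H))$. The only vertices whose domination could be damaged by deleting $u'$ are the $0$-vertices $v\in N_G(u)$ that used $u'$ as a witness; since $u$ now carries $\max\{f(u),f(u')\}\ge f(u')$, the vertex $u$ can take over for $v$ the role that $u'$ played. The main obstacle is the one degenerate configuration where this fails: $f(u)=f(u')=2$ together with a $0$-vertex $v$ adjacent to both, whose only two $2$-witnesses were $u$ and $u'$, so that after merging $v$ sees just a single $2$-neighbour. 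Here I would exploit the weight freed by the merge and set $g(u)=3$ instead of $2$; this keeps $g(V(G))=f(V(H))-1\le\gamma_{dR}(H)$ while handing every such $v$ a $3$-neighbour. A short check over the possibilities $f(u')\in\{0,2,3\}$ (using that a $0$-vertex adjacent to both $u$ and $u'$ must, by validity of $f$, carry a second high-value witness) confirms $g$ is a DRDF in all cases, giving $\gamma_{dR}(G)\le g(V(G))\le\gamma_{dR}(H)$.

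For the upper bound $\gamma_{dR}(H)\le\gamma_{dR}(G)+1$, I would start from a $\gamma_{dR}$-function $f$ of $G$ with $V_1=\emptyset$ and extend it to $H$ by adjoining $u'$. Since $G$ is an induced subgraph of $H$ and adding $u'$ only supplies extra edges, every vertex of $G$ keeps all of its witnesses; hence the whole burden is to dominate $u'$. If $f(u)=3$ or $f(u)=0$ I set $g(u')=0$: in the first case $u'$ is dominated by the $3$-vertex $u$, and in the second $u$'s witnesses lie in $N_G(u)\subseteq N_H(u')$ and dominate $u'$ as well, so the weight is unchanged. The remaining case $f(u)=2$ is precisely where the extra unit is spent: I would set $g(u)=3$ and $g(u')=0$, so that $u'$ is dominated by the $3$-vertex $u$ at a cost of exactly one (which is unavoidable in general, since $u'$ may have no second neighbour of value $\ge 2$).

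In every case $g(V(H))\le f(V(G))+1=\gamma_{dR}(G)+1$, which together with the lower bound completes the proof. The only genuinely delicate point is the degenerate merge case $f(u)=f(u')=2$ in the lower bound; everything else is a routine verification that adding or deleting the twin neither creates nor destroys witnesses, thanks to $N_H[u]=N_H[u']$.
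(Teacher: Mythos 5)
Your proposal is correct and follows essentially the same route as the paper: both bounds come from local edits of an optimal function with a case analysis on the values at $u$ and $u'$ (restriction/swap/merge of $u'$ into $u$ for the lower bound; extending by $g(u')=0$ and, when $f(u)=2$, raising $u$ to $3$ for the upper bound). Your max-merge $g(u)=\max\{f(u),f(u')\}$ with the explicit degenerate fix for $f(u)=f(u')=2$ is just a unified packaging of the paper's interchange and reassignment cases, and it even covers the configuration $f(u)=2,\ f(u')=3$ directly, which the paper dismisses with an unproved (though true, by minimality) claim that it cannot arise.
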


\begin{proof}
    For the left inequality, let $f$ be a $ \gamma_{dR}$-function of $H$. Then $f|_{V(G)}$ will be a DRDF on $G$ in either of the
    following cases.\\
    (i)$f(u)=3$\\
    (ii)$f(u)=2$ and $f(u')=0$.\\
    The case when $f(u)=2$ and $f(u')=3$ won't arise. If $f(u)=f(u')=2$, then reassign the value of $u$ as $3$. If $f(u)=0$, then interchange the values of $u$ and $u'$. In these two cases, the restriction of reassignment to $V(G)$ will be a DRDF of $G$.
    Hence $\gamma_{dR}(G)\leqslant \gamma_{dR}(H)$.\par
    To prove the right inequality, let $f$ be a $ \gamma_{dR}$-function of $G$. If $u\in V_{0}^{f}\cup  V_{3}^{f}$ or $|N(u)\cap  V_{3}^{f}|\geqslant 1$ or $|N(u)\cap  V_{2}^{f}|\geqslant 2$, then the function $f'$ which assigns $0$ to $u'$ and $f(u)$ to every other vertices is a DRDF of $H$. Otherwise define $f'$ on $V(H)$ as follows.
    \[f'(v)=
    \begin{cases}
    3, & if\ \ v=u ,\\
    0,&  if\ \ v=u',\\
    f(v),& otherwise.
    \end{cases}
    \]
    Then $f'$ is a DRDF of $H$ with weight one greater than that of $f$ which completes the proof.
\end{proof}

\begin{thm}
    Let $G$ be a graph and $u\in V(G)$. Let $H$ be the graph obtained from $G$ by attaching a false twin $u'$ to $u$. Then $\gamma_{dR}(G)\leqslant \gamma_{dR}(H)\leqslant \gamma_{dR}(G)+2$.
\end{thm}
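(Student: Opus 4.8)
The plan is to mirror the structure of the preceding true-twin theorem, treating the two inequalities separately, while keeping in mind the essential difference that a false twin $u'$ satisfies $N(u')=N(u)$ and is \emph{not} adjacent to $u$. By Proposition \ref{1} I may restrict attention throughout to DRDFs with $V_{1}=\phi$, so that every vertex carries a value in $\{0,2,3\}$.

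For the upper bound $\gamma_{dR}(H)\le\gamma_{dR}(G)+2$, I would start from a $\gamma_{dR}$-function $f$ of $G$ and simply adjoin $u'$. Since $u'$ introduces no new edges among the vertices of $G$ and only supplies an extra neighbour to the vertices of $N(u)$, every vertex of $G$ remains double Roman dominated no matter what value $u'$ receives; hence the only thing to arrange is that $u'$ itself is legal. If $f(u)=0$, then $u$ — and therefore $u'$, which has the \emph{same} neighbourhood — already has a neighbour in $V_{3}^{f}$ or two neighbours in $V_{2}^{f}$, so setting $f'(u')=0$ gives a DRDF of $H$ of the same weight. If $f(u)\ge 2$, I assign $f'(u')=2$, which makes $u'$ legal on its own and raises the weight by exactly $2$. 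Either way $\gamma_{dR}(H)\le\gamma_{dR}(G)+2$, and this direction is routine.

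The real work is the lower bound $\gamma_{dR}(G)\le\gamma_{dR}(H)$. I take a $\gamma_{dR}$-function $f$ of $H$, write $a=f(u)$ and $b=f(u')$, and delete $u'$ to return to $G$. Deleting $u'$ can only endanger those vertices of $N(u)=N(u')$ that carry value $0$ and were being dominated with the help of $u'$; no other vertex loses a neighbour, since $u$ and all vertices outside $N(u)$ were never adjacent to $u'$. To repair this I keep $f$ unchanged off $u$ and raise the value at $u$ to a value $c\in\{0,2,3\}$ chosen to reproduce exactly the domination the pair $\{u,u'\}$ had supplied to their common neighbours: $c=3$ when some member of the pair had value $3$ \emph{or} when $a=b=2$, $c=2$ when exactly one of $a,b$ equals $2$, and $c=0$ when $a=b=0$. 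A short case check then shows each $w\in N(u)$ with $f(w)=0$ remains double Roman dominated under the modified function $g$ (the substitute neighbour $u$ of value $c$ now playing the former role of $u'$), while the condition at $u$ itself and at vertices outside $N(u)$ is immediate, as they either retain all their relevant neighbours or need no domination at all.

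It remains to control the weight, and this is where the delicate point sits. Writing $W=\sum_{v\in V(G)}f(v)$, so that the weight of $f$ on $H$ equals $W+b$, the weight of $g$ is $W-a+c$; hence I need $c\le a+b$. The only ways $c$ can reach $3$ are when $\{a,b\}$ contains a $3$ (whence $a+b\ge 3$) or when $a=b=2$ (whence $a+b=4$), and in every remaining case $c=\max(a,b)\le a+b$; so $c\le a+b$ always, giving $\mathrm{weight}(g)\le W+b=\gamma_{dR}(H)$ and therefore $\gamma_{dR}(G)\le\gamma_{dR}(H)$. The step I expect to be the main obstacle is precisely the subcase $a=b=2$: there a common neighbour of $u$ and $u'$ may be dominated \emph{only} by the two twins, so after deleting $u'$ one is forced to promote $u$ to value $3$, and one must verify that the unit of weight spent on this promotion is covered by the weight $b=2$ freed by removing $u'$.
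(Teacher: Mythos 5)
Your proof is correct and follows essentially the same route as the paper: for the upper bound, extend a $\gamma_{dR}$-function of $G$ by assigning $u'$ the value $0$ (when $f(u)=0$, using $N(u')=N(u)$) or $2$ (otherwise), and for the lower bound, delete $u'$ from a $\gamma_{dR}$-function of $H$ after reassigning $u$ a value that covers the twins' joint contribution to their common neighbours. If anything, your explicit case table is more careful than the paper's text, which proves the lower bound merely by citing the true-twin argument --- an argument whose dismissal of the case $f(u)=2$, $f(u')=3$ relies on the adjacency of $u$ and $u'$ and so does not transfer verbatim to false twins, whereas your assignment $c=3$ together with the check $c\le a+b$ handles that case directly.
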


\begin{proof}
    The proof of the left inequality is  same as that of true twin.\par
    To prove the right inequality, let $f$ be a $ \gamma_{dR}$-function of $G$. If  $|N(u)\cap  V_{3}^{f}|\geqslant 1$ or $|N(u)\cap  V_{2}^{f}|\geqslant 2$, then the function $f'$ which assigns $0$ to $u'$ and $f(u)$ to every other vertices is a DRDF of $H$ with weight equal to $\gamma_{dR}(G)$. Otherwise the function $g$ which assigns $2$ to $u'$ and $f(u)$ to all other vertices is a DRDF of $H$ with weight $2$ greater than that of $f$ which completes the proof.
\end{proof}

\section{Double Roman Domination and Roman Domination}
By Proposition \ref{4}, for any nontrivial connected graph, $ \gamma_{R}(G)< \gamma_{dR}(G)< 2\gamma_{R}(G)$ \cite{Bee}. Then it is natural to investigate whether an ordered pair $(a,b)$ is realizable as the Roman domination number and double Roman domination number of some non trivial connected graph if $a<b<2a$. In \cite{Bee}, R. A. Beeler et.al gave an example for which $(a,2a-1)$ is realizable. The ordered pair $(a, a+\lceil \frac{a}{2}\rceil)$ is realizable for path $P_{a+\lceil \frac{a}{2}\rceil-1}$ (\cite{Anu},\cite{Bee},\cite{Xu}). The ordered pair $(2,3)$ is realization for $P_{2}$. So we restrict $a$ to be greater than $2$ in the next theorem.

\begin{thm}\label{a,a+1}
    The ordered pair $(a,a+1)$, where $a$ is an even number greater than $2$, is not realizable as the Roman domination number and  the double Roman domination number of any graph.
\end{thm}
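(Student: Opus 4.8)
The plan is to argue by contradiction. Suppose some graph $G$ satisfies $\gamma_{R}(G)=a$ and $\gamma_{dR}(G)=a+1$ with $a$ even and $a>2$; I will derive a contradiction by manufacturing a Roman dominating function of weight $a-1$. By Proposition \ref{1} I may fix a $\gamma_{dR}$-function $f=(V_{0},V_{2},V_{3})$ with $V_{1}=\phi$, so that $2|V_{2}|+3|V_{3}|=a+1$. The first step is a general reduction from double Roman to Roman domination: the function assigning $2$ to every vertex of $V_{2}\cup V_{3}$ and $0$ elsewhere is an RDF, since every vertex of $V_{0}$ has either two neighbours in $V_{2}$ or one in $V_{3}$, hence a neighbour in $V_{2}\cup V_{3}$. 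Its weight is $2|V_{2}|+2|V_{3}|=\gamma_{dR}(G)-|V_{3}|$, so $\gamma_{R}(G)\leqslant \gamma_{dR}(G)-|V_{3}|$. Substituting the assumed values gives $a\leqslant a+1-|V_{3}|$, that is $|V_{3}|\leqslant 1$. The case $|V_{3}|=0$ is ruled out immediately on parity grounds, since it would force $2|V_{2}|=a+1$ to be odd.

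This leaves the single case $|V_{3}|=1$, say $V_{3}=\{w\}$, where $2|V_{2}|=a-2$ and hence $|V_{2}|=\frac{a}{2}-1\geqslant 1$; this is precisely where the hypothesis $a>2$ is used, guaranteeing that $V_{2}$ is nonempty. The key move is to shave one unit off the weight-$a$ RDF produced above. I pick any $x\in V_{2}$ and define $h$ by setting $h(x)=1$, $h\equiv 2$ on $(V_{2}\cup\{w\})\setminus\{x\}$, and $h\equiv 0$ on $V_{0}$. The number of vertices carrying value $2$ is then $|V_{2}|$, so $h$ has weight $2|V_{2}|+1=(a-2)+1=a-1$.

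It remains to verify that $h$ is genuinely an RDF, i.e.\ that every zero-valued vertex (these are exactly the vertices of $V_{0}$) has a neighbour of value $2$. Here I would argue uniformly: by the DRDF property each $u\in V_{0}$ is either adjacent to $w$ or has at least two neighbours in $V_{2}$. In the first case $w$, which still carries value $2$ under $h$, dominates $u$; in the second case at least one of the $\geqslant 2$ neighbours of $u$ in $V_{2}$ is different from the demoted vertex $x$ and therefore still carries value $2$. Thus $h$ is an RDF of weight $a-1<a=\gamma_{R}(G)$, contradicting the minimality of $\gamma_{R}(G)$, and the proof is complete. The main obstacle is exactly this $|V_{3}|=1$ case: the other two cases collapse out of the counting inequality and a parity check, whereas here the delicate point is confirming that demoting a single $V_{2}$-vertex preserves Roman domination, which hinges on the observation that any $V_{0}$-vertex not adjacent to $w$ is forced to have two neighbours in $V_{2}$. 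I expect no difficulty beyond this bookkeeping once that observation is isolated.
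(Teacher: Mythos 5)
Your proof is correct and takes essentially the same route as the paper: both arguments turn the minimum DRDF $f=(V_{0},V_{2},V_{3})$ into a Roman dominating function of weight at most $a-1$ by replacing every $3$ with a $2$ and demoting one vertex of $V_{2}$ to the value $1$, with the RDF property verified in exactly the same way (any $V_{0}$-vertex not adjacent to a $V_{3}$-vertex has two $V_{2}$-neighbours, at least one of which survives the demotion). The only difference is organizational: you argue by contradiction, using the assumption $\gamma_{R}(G)=a$ to force $|V_{3}|\leqslant 1$ and parity to exclude $|V_{3}|=0$, whereas the paper uses parity to get $|V_{3}|\geqslant 1$ and splits on $V_{2}=\phi$ versus $V_{2}\neq\phi$ --- so your framing in fact sidesteps the paper's separate $V_{2}=\phi$ branch entirely.
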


\begin{proof}
    Let $G$ be any  graph with $\gamma_{dR}(G)=a+1$. Let $f=(V_{0},V_{2}, V_{3})$ be a $\gamma_{dR}$-function of $G$. Since $a+1$ is odd, $|V_{3}|\geqslant 1$. If $V_{2}=\phi$, then $|V_{3}|> 2$. Define a function $g=(V_{0}', V_{1}', V_{2}')$ as follows: $V_{0}'=V_{0}, V_{1}'=\phi, V_{2}'=V_{3}$. Clearly, under $g$, every vertex in $V_{0}'$ has a neighbor in $V_{2}'$. Hence $g$ is a RDF so that $\gamma_{R}(G)\leq a-2$. If $V_{2}\neq \phi$, then we can find a vertex $u\in V_{2}$ such that $g'=(V_{0}'', V_{1}'', V_{2}'')$ where $V_{0}''=V_{0}, V_{1}''={u}, V_{2}''=V_{2}-\{u\}\cup V_{3}$ is a RDF of $G$ so that  $\gamma_{R}(G)\leq a-1$. Hence the result is true.
\end{proof}

\noindent \textbf{Observation:} The ordered pair $(2,4)$ is not realizable as the Roman domination number and  the double Roman domination number of any graph.
\begin{proof}
    Let $G$ be a graph with $\gamma_{dR}(G)=4$. Then $G$ is a graph of order $n\geqslant 4$ and there exists a pair of independent twin vertices $u_{1}$, $u_{2}$, each of which dominates all other vertices. So in any RDF the total weight will be at least $3$ and hence $(2,4)$ is not realizable. \end{proof}

The only graph with $\gamma_{R}=1$ is $K_{1}$ for which $\gamma_{dR}=2$. Also, $(2,3)$ is realizabe for any graph with a universal vertex. Our final result shows that every value in the range of Proposition \ref{4}, except for  those values which have been already mentioned, is realizable for connected bipartite graph.
\begin{thm}
    The ordered pair $(a,b)$, where $a>1$ and $b\neq 3$ is an integer greater than $a$, is realizable as the Roman domination number and the double Roman domination number of some connected bipartite graph if $a=\lfloor\frac{b}{2}\rfloor+1, \lfloor\frac{b}{2}\rfloor+2,\ldots,2\lfloor\frac{b}{2}\rfloor-1 $.
\end{thm}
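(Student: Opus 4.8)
The plan is to realize each admissible pair $(a,b)$ by an explicit connected bipartite graph assembled from a single \emph{efficient core} together with a controlled number of pendant vertices. Throughout write $s=\lfloor b/2\rfloor$, so that by Proposition \ref{4} and Theorem \ref{a,a+1} the admissible range is exactly $s+1\le a\le 2s-1$ with $b\in\{2s,2s+1\}$.

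\textbf{Core gadget.} For $k\ge 2$ let $B_k$ be the bipartite graph with one part $\{d_1,\dots,d_k\}$ (the \emph{dominators}) and, for every pair $\{i,j\}$, a set of $m$ independent \emph{clients}, each adjacent to exactly $d_i$ and $d_j$; here $m$ is taken large (say $m\ge 2k$). I claim $B_k$ is connected and bipartite with $\gamma(B_k)=k$, $\gamma_{dR}(B_k)=2k$ and $\gamma_R(B_k)=2k-1$. The upper bounds are immediate: assigning $2$ to every $d_i$ is a DRDF of weight $2k$ (each client has two value-$2$ neighbours), and assigning $2$ to $d_1,\dots,d_{k-1}$ and $1$ to $d_k$ is an RDF of weight $2k-1$ (omitting one vertex of $K_k$ still covers all client classes). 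For the lower bounds, the largeness of $m$ forces the value-$2$ dominators of any low-weight function to cover every client class: if some pair $\{i,j\}$ has neither endpoint at value $2$, all $m$ of its clients must receive positive value, already exceeding the target weights. Since such a cover has size $\ge k-1$, one gets $\gamma(B_k)=k$, $\gamma_R(B_k)\ge 2k-1$, and $\gamma_{dR}(B_k)\ge 2\gamma(B_k)=2k$ by Proposition \ref{2}.

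\textbf{Pendant gadgets.} To the core I attach pendant leaves, each changing the two parameters by a fixed amount. First I will check that a leaf hung on a \emph{client} $c$ (which carries value $0$ in the optimal functions of $B_k$) raises $\gamma_R$ by $1$ and $\gamma_{dR}$ by $2$: covering the leaf forces value $1$ on it in a minimum RDF and value $2$ in a minimum DRDF (promoting $c$ to $2$, respectively $3$, is never cheaper), and the leaf can never help dominate the core. Second, a leaf hung on a \emph{dominator} $d_1$ raises $\gamma_R$ by $0$ and $\gamma_{dR}$ by $1$: in an optimal RDF $d_1$ already carries value $2$ and covers the leaf for free, whereas in an optimal DRDF $d_1$ must be promoted from $2$ to $3$, at cost exactly $1$.

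\textbf{Assembly and counting.} Given $(a,b)$, set $k=a-s+1\ (\ge 2)$, attach pendant leaves to $p=2s-1-a\ (\ge 0)$ distinct clients, and attach $q=b-2s\in\{0,1\}$ further leaves to a dominator (here $q\le k-1$). The resulting graph $\widehat G$ is connected and bipartite, and summing the contributions gives
\[
\gamma_R(\widehat G)=(2k-1)+p=a,\qquad \gamma_{dR}(\widehat G)=2k+2p+q=b,
\]
with $m$ chosen large enough to supply $p$ distinct clients. The routine part is exhibiting the explicit RDF and DRDF of weights $a$ and $b$. The main obstacle is the matching lower bounds: proving $\gamma_R(B_k)\ge 2k-1$ through the vertex-cover argument above, and then establishing \emph{additivity} of the attachments, i.e. showing that in $\widehat G$ no optimal function can use a leaf to dominate a core vertex, or otherwise couple the gadgets, so as to beat $a$ or $b$. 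I expect controlling this interaction between the leaves and the large client classes to require the most care.
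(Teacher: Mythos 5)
Your assembly has a genuine failure, not just a deferred verification: the additivity claim ``a leaf hung on a client raises $\gamma_{dR}$ by exactly $2$'' is false when $k=2$, and since your counting forces $k=a-s+1$ (with $s=\lfloor b/2\rfloor$), this kills the construction for the entire boundary family $a=\lfloor b/2\rfloor+1$ with $b$ odd, $b\geqslant 7$. Concretely, take $(a,b)=(4,7)$, so $k=2$, $p=1$, $q=1$: your $\widehat G$ is $K_{2,m}$ with dominators $d_1,d_2$, a leaf $\ell$ on a client $c$, and a leaf $\ell_0$ on $d_1$. The function assigning $3$ to $d_1$, $3$ to $c$ and $0$ elsewhere is a valid DRDF: $\ell_0$ and every client see the $3$ at $d_1$, while $\ell$ and $d_2$ see the $3$ at $c$. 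Hence $\gamma_{dR}(\widehat G)\leqslant 6<7=b$; the graph realizes $(4,6)$, not $(4,7)$. The mechanism is exactly the leaf--core coupling you flagged as needing care: promoting a leaf-bearing client to $3$ serves its leaf \emph{and} double Roman dominates a dominator simultaneously, saving $1$ over your intended assignment whenever $k=2$, $p\geqslant 1$, $q=1$ (for $k\geqslant 3$ the exploit is blocked by client classes $\{i,l\}$ with $l\neq j$, so those cases appear safe). Relatedly, your disjointness heuristic already fails for plain domination: $\{d_1,c\}$ dominates all of $K_{2,m}$ together with $\ell$, so $\gamma(\widehat G)=k+p-1$ rather than $k+p$ once $p\geqslant 1$, and the route $\gamma_{dR}\geqslant 2\gamma$ of Proposition \ref{2} cannot certify $2k+2p$; a direct case analysis is unavoidable. (A smaller slip: your vertex-cover count for the core only yields $\gamma_{R}(B_k)\geqslant 2k-2$; you need the extra observation that the uncovered dominator, whose neighbours are all clients, must itself receive weight or force a client to value $2$.)

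For comparison, the paper sidesteps these interactions entirely: it fixes $s=\lfloor b/2\rfloor$ vertices $x_1,\dots,x_s$ in one part and tunes $a$ not by pendant leaves but by choosing \emph{which} pairs receive two common neighbours (only pairs whose smaller index is at most $i$), so that in the minimum RDF the unpaired vertices $x_{i+1},\dots,x_s$ simply carry value $1$ themselves; odd $b$ is handled by attaching \emph{two} pendants to $x_1$, which robustly forces $f(x_1)=3$ in a DRDF with no profitable alternative. A plausible repair of your approach is to treat the case $a=\lfloor b/2\rfloor+1$, $b$ odd separately (e.g., via the two-pendant trick, or by routing the odd unit through the pair structure as the paper does), but you would still owe the matching lower bounds for the assembled graphs, which your proposal explicitly defers and which, as the counterexample shows, cannot be dispatched by the closed-neighbourhood or $2\gamma$ arguments you sketch.
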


\begin{proof}
    The proof is by construction. To obtain a graph with $\gamma_{R}=\lfloor\frac{b}{2}\rfloor+i$ and $\gamma_{dR}=b$, for $i=1,2,\ldots,\lfloor\frac{b}{2}\rfloor-1$, construct a bipartite graph $G_{i}(X,Y)$ as follows: Take $\lfloor\frac{b}{2}\rfloor$ vertices $x_{1},x_{2},\ldots,x_{\lfloor\frac{b}{2}\rfloor}$ in $X$. Corresponding to each pair of vertices, $ (x_{j},x_{k})$, for $1\leqslant j\leqslant i$, $j< k \leqslant \lfloor\frac{b}{2}\rfloor$, take two vertices in $Y$ and make them adjacent with $x_{j}$ and $x_{k}$.\\
    \noindent \textbf{\emph{Case 1:}} $b$ is even.\\
    A  DRDF of $G_{i}(X,Y)$ can be obtained by giving $2$ to each and every vertex in $X$ and $0$ to all vertices in $Y$ so that $\gamma_{dR}(G_{i}(X,Y))\leqslant b$.\\
    \textbf{\emph{Case 2:}} $b$ is odd.\\
    In addition to the above construction, take two pendant vertices in $Y$ and make them adjacent with $x_{1}$. In this case a DRDF can be obtained by giving $3$ to $x_{1}$, $2$ to remaining vertices in $X$ and $0$ to all vertices in $Y$  so that $\gamma_{dR}(G_{i}(X,Y))\leqslant b$.\par
    In both cases, a RDF can be obtained by assigning $2$ to $x_{j}$, for $1\leqslant j\leqslant i$, $1$ to $x_{j}$, for $i+1\leqslant j\leqslant \lfloor\frac{b}{2}\rfloor $, and $0$ to all vertices in $Y$ so that $\gamma_{R}(G_{i}(X,Y))\leqslant \lfloor\frac{b}{2}\rfloor+i$. It is easy to verify that these functions are in fact  minimum. Hence the result is true.\end{proof}


%

%

\ni \textbf{Acknowledgements:}
    The first author
    thanks University Grants Commission for granting fellowship under
    Faculty Development Programme (F.No.FIP/$12^{th}$ plan/KLMG 045 TF
    07 of UGC-SWRO).


\begin{thebibliography}{}
    %
    %
    \bibitem{Abd1} H. A. Ahangar, M. A. Henning, V.
    Samodivkin, I. G. Yero, \emph{ Total Roman Domination in
        Graphs}, Appl. Anal. Discrete Math., 10 (2016),
    501-517.

    \bibitem{Amj} J. Amjadi, S. Nazari-Moghaddam, S. M. Sheikholeslami, L. Volkmann, \emph{An Upper Bound on the Double Roman Domination Number}, J. Comb. Optim., https://doi.org/10.1007/s 10878-018-0286-6.


    \bibitem{Anu} Anu V., Aparna Lakshmanan S., \emph{Double Roman Domination Number}, Discrete Appl. Math., 244 (2018), 198-204.


    \bibitem{Bal} R. Balakrishnan, K. Ranganathan, \emph{A Text Book of Graph
        Theory}, Springer, (2000).

    \bibitem{Bee} R. A. Beeler, T. W. Haynes, S. T. Hedetniemi, \emph{Double Roman
        Domination}, Discrete Appl. Math.,  211(1) (2016), 23-29.





    \bibitem{Coc} E. J. Cockayne, P. A. Dreyer Jr., S. M. Hedetniemi, S. T. Hedetniemi, \emph{Roman Domination in Graphs}, Discrete Math., 278 (2004), 11-12.

    \bibitem{God}C. D. Godsil, B. D. McKay, \emph{A New Graph Product and its Spectrum}, Bull. Aust. Math. Soc., 18 (1978), 21-28.


    \bibitem{Hao} G. Hao, X. Chen, L. Volkmann, \emph{Double Roman Domination in Digraphs}, Bull.  Malays. Math. Sci. Soc., http://doi.org/10.1007/s40840-017-0582-9.





    \bibitem{Vol} L. Volkmann, \emph{Double Roman Domination and Domatic Numbers of Graphs}, Commun.  Comb.  Optim., 3(1) (2018), 71-77.

    \bibitem{Xu}  L. Xu, \emph{Roman Domination}, http://www.math.uchicago.
    edu/~may/REUPapers/Xu, Linfeng.pdf (manuscript).





    \bibitem{Wu}Y. Wu, \emph{An Improvement on Vizing's Conjecture}, Inform. Process. Lett., 113(3)  (2013), 87-88.

    \bibitem{Yer} I. G. Yero, J. A. Rodr\'{\i}guez-Vel\'{a}zquez, \emph{Roman Domination in Cartesian Product Graphs and Strong Product Graphs}, Appl. Anal. Discrete Math.,  7(2)  (2013),  262-274.


\end{thebibliography}


\end{document}